\newtheorem{theorem}{Theorem}[section]
\newtheorem{lemma}[theorem]{Lemma}
\theoremstyle{definition}
\theoremstyle{remark}
\newtheorem{remark}[theorem]{Remark}
\numberwithin{equation}{section}
\newcommand{\rd}{\mathrm{d}}
\newcommand{\RR}{\mathbb{R}}
\newcommand{\CC}{\mathbb{C}}
\newcommand{\ud}[2]{u^{\left( #1 \right)}_{ #2 }}
\newcommand{\vd}[2]{v^{\left( #1 \right)}_{ #2 }}
\newcommand{\Vd}[2]{V^{\left( #1 \right)}_{ #2 }}
\newcommand{\tVd}[2]{\tilde{V}^{\left( #1 \right)}_{ #2 }}
\newcommand{\eEd}[2]{e^{\left( #1 \right)}_{E #2 }}
\newcommand{\EEd}[2]{e^{\left( #1 \right)}_{E, #2 }}
\newcommand{\eNd}[2]{e^{\left( #1 \right)}_{N #2 }}
\newcommand{\ENd}[2]{e^{\left( #1 \right)}_{N, #2 }}
\newcommand{\eVd}[2]{e^{\left( #1 \right)}_{V #2 }}
\newcommand{\EVd}[2]{e^{\left( #1 \right)}_{V, #2 }}
\newcommand{\Ed}[2]{E^{\left( #1 \right)}_{ #2 }}
\newcommand{\tEd}[2]{\tilde{E}^{\left( #1 \right)}_{ #2 }}
\newcommand{\Nd}[2]{N^{\left( #1 \right)}_{ #2 }}
\newcommand{\tNd}[2]{\tilde{N}^{\left( #1 \right)}_{ #2 }}
\newcommand{\taEd}[2]{\tau^{\left( #1 \right)}_{E #2 }}
\newcommand{\TaEd}[2]{\tau^{\left( #1 \right)}_{E, #2 }}
\newcommand{\taNd}[2]{\tau^{\left( #1 \right)}_{N #2 }}
\newcommand{\TaNd}[2]{\tau^{\left( #1 \right)}_{N, #2 }}
\newcommand{\taVd}[2]{\tau^{\left( #1 \right)}_{V #2 }}
\newcommand{\TaVd}[2]{\tau^{\left( #1 \right)}_{V, #2 }}
\newcommand{\fd}{\delta^+}
\newcommand{\bd}{\delta^-}
\newcommand{\cdd}{\delta^{\left\langle 2 \right\rangle}}
\newcommand{\fa}{\mu^+}
\newcommand{\ba}{\mu^-}
\newcommand{\ca}{\mu^{\left\langle 1 \right\rangle}}
\newcommand{\Dx}{\Delta x}
\newcommand{\Dt}{\Delta t}
\newcommand{\dt}{\mathrm{d} t}
\newcommand{\dx}{\mathrm{d} x}
\newcommand{\sn}{\mathrm{sn}}
\newcommand{\cn}{\mathrm{cn}}
\newcommand{\dn}{\mathrm{dn}}
\newcommand{\Emax}{E_\mathrm{max}}
\begin{document}

\title[Analysis and comparison of energy-conservative schemes for Zakharov]{Mathematical analysis and numerical comparison of energy-conservative schemes for the Zakharov equations}


\author{Shuto Kawai}
\address{Department of Mathematical Informatics, Graduate School of Information Science and Technology, University of Tokyo, 7-3-1, Bunkyo-ku, Tokyo 113-8656, Japan}
\curraddr{}
\email{kawai-sk@g.ecc.u-tokyo.ac.jp}
\thanks{}

\author{Shun Sato}
\address{}
\curraddr{}
\email{}
\thanks{}

\author{Takayasu Matsuo}


\keywords{conservative scheme, Zakharov equations, mathematical analysis, numerical experiments}

\date{}

\dedicatory{}

\begin{abstract}
  Furihata and Matsuo proposed in 2010 an energy-conserving scheme for the Zakharov equations, as an application of the discrete variational derivative method (DVDM).
  This scheme is distinguished from conventional methods (in particular the one devised by Glassey in 1992) in that the invariants are consistent with respect to time, but it has not been sufficiently studied both theoretically and numerically.
  In this study, we theoretically prove the solvability under the loosest possible assumptions.
  We also prove the convergence of this DVDM scheme by improving the argument by Glassey.
  Furthermore, we perform intensive numerical experiments for comparing the above two schemes.
  It is found that the DVDM scheme is superior in terms of accuracy, but since it is fully-implicit, the linearly-implicit Glassey scheme is better for practical efficiency.
  In addition, we proposed a way to choose a solution for the first step that would allow Glassey's scheme to work more efficiently.
\end{abstract}

\maketitle

\section{Introduction}

In this paper, for $E: \RR \times \RR \to \CC$ and $N: \RR \times \RR \to \RR$, we consider the initial value problem of the Zakharov equations:
\begin{align}\label{eq_Zak}
  \begin{cases}
    \mathrm{i} E_t + E_{xx} &= NE \\
    N_{tt} - N_{xx} &= \left( \left\vert E \right\vert^2 \right)_{xx}
  \end{cases} 
\end{align}
with the initial conditions
\begin{align*}
  E(0,x) = E^0(x),\quad N(0,x) = N^0(x),\quad N_t(0,x) = N^1(x) \qquad \left(x \in \mathbb{R}\right),
\end{align*}
and the periodic boundary conditions
\begin{align}\label{eq_period}
  E(t, x+L) = E(t,x),\quad N(t, x+L) = N(t,x) \qquad \left(t>0,\, x \in \mathbb{R}\right)
\end{align}
for some period $L > 0$.

It is well known that \eqref{eq_Zak} possesses the following invariants:
\begin{align}
  \mathcal{N} (t) &\coloneqq \int_0^L \left\vert E(t,x) \right\vert^2 \dx = \mathcal{N} (0),\\
  \mathcal{E} (t) &\coloneqq \int_0^L \left[ \left\vert E_x(t,x) \right\vert^2 + \frac{1}{2}\left( N(t,x)^2 + V_x(t,x)^2 \right) + N(t,x) \left\vert E(t,x) \right\vert^2 \right] \dx \\
  &= \mathcal{E} (0),\nonumber
\end{align}
where $V(t,x)$ is uniquely defined by
\begin{align}\label{eq_Vdef}
  N_t =V_{xx},\quad V_t =N+\vert E \vert^2,\quad V(0,0) = 0.
\end{align}
$\mathcal{N}(t)$ and $\mathcal{E}(t)$ are called ``norm'' and ``energy''. For $V$ and $V_x$ to also satisfy the periodic boundary conditions
\begin{align}\label{eq_period2}
  V(t, x+L) = V(t,x),\quad V_x(t, x+L) = V_x(t,x), \qquad \left(t>0,\, x \in \mathbb{R}\right)
\end{align}
we need to assume that
\begin{align}\label{eq_assumption}
  \int_0^L N^1(x) \dx = 0.
\end{align}

The Zakharov equations describe the propagation of Langmuir waves in a plasma. Here, $E(t, x) \in \mathbb{C}$ is the envelope of the high-frequency electric field and $N (t, x) \in \mathbb{R}$ is the deviation of the ion density from equilibrium value.
Since its introduction by Zakharov~\cite{Zakharov}, the Zakharov equations have been widely recognized as a general model governing the interaction of dispersive and non-dispersive waves.
It is now applied to a wide variety of physical problems, including theories of molecular dynamics and fluid mechanics.
Theoretical properties of Zakharov systems, such as the existence of regularity of solutions, uniqueness, and collision of solutions, are shown by~\cite{Bourgain,Colliander,Compaan} and others.

Numerical methods for the Zakharov equations and related equations have been studied using various methods, such as the time-splitting spectral (TSSP) method~\cite{Bao1, Jin}, the pseudospectral method~\cite{Bao2}, the multisymplectic method~\cite{Wang}, the local discontinuous Galerkin methods \cite{Xia}, the Legendre spectral method~\cite{Ji}, and Jacobi pseudospectral approximation~\cite{Bhrawy}.
Among them, several structure-preserving finite difference schemes have also been proposed, for example, \cite{Chang, Glassey, Pan, Zhou}, which have been shown to be useful through numerical experiments and theoretical proofs. However, their corresponding discrete conserved quantities are defined with multiple numerical solutions at several different time steps, and furthermore they are not symmetric in time. This is in some sense inconsistent and not desirable, since the original continuous conserved quantities are defined on a single time point $t$. This distortion may damage the advantage originally expected to such structure-preserving methods.
An energy-preserving scheme without such distortion was first presented in a textbook of a framework for designing structure-preserving schemes, called the discrete variational derivative method (DVDM)~\cite{DVDM}.
This scheme is, however, only defined as one of the illustrating examples of the framework, and no numerical experiments or mathematical analysis have been performed there and so far.
In addition, although this scheme is surely favorable in that it has consistent discrete conserved quantities defined on a single time step, it is obviously computationally expensive due to its full-implicitness.
Thus it is not at all clear whether the scheme is in fact advantageous compared to cheaper (but distorted) schemes, such as the linearly-implicit difference scheme proposed by Glassey~\cite{Glassey}.

In view of the above background, we have two aims in the present paper.
First, we give a rigorous mathematical analysis of the DVDM scheme.
Next, we provide intensive numerical comparisons with Glassey's scheme, which reveals actual performance of these structure-preserving schemes.

This paper is organized as follows.
In Section~2, notations and lemmas used in the following sections are introduced.
In Section~3, the results of mathematical analysis are described.
In particular, the convergence is proved in a way that improves on the arguments of the previous study.
In Section~4, numerical experiments are conducted to compare the performance of the DVDM scheme with Glassey's scheme.
Finally, our conclusions are summarized in Section~5.


\section{Preliminaries}


\subsection{Notations and discrete symbols}

In order to introduce discrete symbols, operators and norms (shown in~\cite{DVDM}, for example),
we use the symbol $ \ud{m}{k}$ $( m=0,\dots, M$; $k \in \mathbb{Z}$) to represent $E,$ $N$ and $V$, where $ \Dt$ ($\coloneqq T/M$ for some $T\in\mathbb{R}_+$) and $ \Delta x \, (\coloneqq L/K)$ are the temporal and spatial mesh sizes, respectively.
Here, we assume the discrete periodic boundary condition $ \ud{m}{k+K} = \ud{m}{k} \left ( k \in \mathbb{Z} \right) $.
We also use the notation $ \ud{m}{} \coloneqq \left( \ud{m}{1} , \dots ,\ud{m}{K} \right)^{\top} $.\\

The spatial forward and backward difference operators are defined as follows:
\begin{align*}
  \fd_x \ud{m}{k} \coloneqq \frac{ \ud{m}{k+1} - \ud{m}{k} }{\Dx}, \quad \bd_x \ud{m}{k} \coloneqq \frac{\ud{m}{k} - \ud{m}{k-1}}{\Dx}.
\end{align*}
To keep our presentation simple, we also use the notation $\fd_x \ud{m}{} \coloneqq \left( \fd_x \ud{m}{k} \right)_k^{\top}$. This also applies to other discrete operators below.

The spatial second order central difference operator is defined as follows:
\begin{align*}
  \cdd_x \ud{m}{k} \coloneqq \frac{ \ud{m}{k+1} - 2\ud{m}{k} + \ud{m}{k-1} }{(\Dx)^2}.
\end{align*}
The spatial forward average operator is defined as follows:
\begin{align*}
  \fa_x \ud{m}{k} \coloneqq \frac{ \ud{m}{k+1} + \ud{m}{k} }{2}.
\end{align*}
The temporal forward and backward difference operators are defined as follows:
\begin{align*}
  \fd_t \ud{m}{k} \coloneqq \frac{\ud{m+1}{k} - \ud{m}{k}}{\Delta t}, \quad \bd_t \ud{m}{k} \coloneqq \frac{ \ud{m}{k} - \ud{m-1}{k} }{\Dt}.
\end{align*}
The temporal forward, backward and central average operators are defined as follows:
\begin{align*}
  \fa_t \ud{m}{k} &\coloneqq \frac{ \ud{m+1}{k} + \ud{m}{k} }{2}, \quad \ba_t \ud{m}{k} \coloneqq \frac{ \ud{m}{k} + \ud{m-1}{k} }{2}, \\
  \ca_t \ud{m}{k} &\coloneqq \frac{ \ud{m+1}{k} + \ud{m-1}{k} }{2}.
\end{align*}

We define the discrete Lebesgue space $L_K^p \, \left(1 \leq p \leq \infty \right)$ as the pair $(\CC^K, \left\| \cdot \right\|_p )$, where the norm is defined as
\begin{align*}
  \|v\|_p \coloneqq \left(\sum_{k=1}^K \left\vert v_k \right\vert^p \Dx \right)^{\frac{1}{p}} (1 \leq p < \infty),\quad \left\| v \right\|_\infty \coloneqq \max_{k\in\mathbb{Z}}\left\vert v_k \right\vert.
\end{align*}
For $L_K^2$, the associated inner product $\left\langle \cdot, \cdot \right\rangle$ is defined as
\begin{align*}
  \left\langle v, w\right\rangle \coloneqq \sum_{k=1}^K v_k \bar{w}_k \Dx.
\end{align*}
For simplicity, we abbreviate $\|\cdot\|_2$ by $\|\cdot\|$ hereafter.

Then, several basic properties hold as follows (their proofs are straightforward from the definition):


\begin{lemma}\label{lem_ope}
  The following properties hold:
  \begin{enumerate}
    \item All the aforementioned operators commute with each other;
    \item $\cdd_x = \fd_x \bd_x;$
    \item $\fd_x ( v_k w_k ) = ( \fd_x v_k ) w_{k+1} + v_k (\fd_x w_k);$
    \item (skew-symmetry) $\left\langle \fd_x v, w \right\rangle = -\left\langle v,\bd_x w \right\rangle;$
    \item $2\left\langle v, w \right\rangle \leq \| v \|^2 + \| w \|^2;$
    \item $\left\| v + w  \right\|^2 \leq 2 (\| v \|^2 + \| w \|^2);$
    \item $\fd_t \left\| z^{(m)} \right\|^2 = 2 \mathrm{Re} \left\langle \fd_t z^{(m)}, \fa_t z^{(m)} \right\rangle$.
  \end{enumerate}
\end{lemma}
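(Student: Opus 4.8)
The plan is to verify each of the seven items directly from the definitions; none of them requires more than elementary manipulation, so I would group them by the kind of argument involved. For (1), I would note that every operator in the list ($\fd_x$, $\bd_x$, $\cdd_x$, $\fa_x$, $\fd_t$, $\bd_t$, $\fa_t$, $\ba_t$, $\ca_t$) is a fixed linear combination of shift operators acting in a single index, spatial or temporal; since shifts along one axis commute with one another and with shifts along the other axis, all these operators commute. Item (2) is then a two-line computation: applying $\bd_x$ and then $\fd_x$ to $\ud{m}{k}$ reproduces the stencil $\bigl(\ud{m}{k+1} - 2\ud{m}{k} + \ud{m}{k-1}\bigr)/(\Dx)^2$. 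For the discrete Leibniz rule (3), I would expand $\fd_x(v_k w_k) = (v_{k+1}w_{k+1} - v_k w_k)/\Dx$ and add and subtract $v_k w_{k+1}$ in the numerator.

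The one place where genuine care is needed is the summation-by-parts identity (4), both because the inner product $\left\langle\cdot,\cdot\right\rangle$ is sesquilinear and because the re-indexing relies on the discrete periodic boundary condition $\ud{m}{k+K} = \ud{m}{k}$. I would expand $\left\langle \fd_x v, w\right\rangle = \sum_{k=1}^{K}(v_{k+1} - v_k)\bar{w}_k$, use periodicity to rewrite $\sum_k v_{k+1}\bar{w}_k = \sum_k v_k \bar{w}_{k-1}$, and recognize the result as $-\sum_k v_k(\bar{w}_k - \bar{w}_{k-1}) = -\left\langle v, \bd_x w\right\rangle$; keeping the conjugate on the correct factor is the only subtlety.

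Finally, (5) follows from expanding $0 \le \|v - w\|^2$ (taking $\mathrm{Re}\left\langle\cdot,\cdot\right\rangle$ in the complex case), (6) follows either from (5) applied to $v$ and $w$ or from the parallelogram identity $\|v+w\|^2 + \|v-w\|^2 = 2\|v\|^2 + 2\|w\|^2$, and (7) follows from the telescoping identity $\|z^{(m+1)}\|^2 - \|z^{(m)}\|^2 = \mathrm{Re}\left\langle z^{(m+1)} - z^{(m)},\, z^{(m+1)} + z^{(m)}\right\rangle$, after which dividing by $\Dt$ and identifying $\fd_t$ and $2\fa_t$ gives the stated equality. I do not expect a real obstacle here; the main thing to watch throughout is the sesquilinearity of $\left\langle\cdot,\cdot\right\rangle$ and the systematic use of discrete periodicity whenever a sum is re-indexed.
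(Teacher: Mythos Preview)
Your proposal is correct and matches the paper's approach: the paper simply states that ``their proofs are straightforward from the definition'' and gives no further detail, so your direct verification of each item from the definitions is exactly what is intended.
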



\begin{lemma}[Discrete Sobolev Lemma; Lemma 3.2 in \cite{DVDM}]\label{lem_Sobolev}
  For any $v$,
  \begin{align*}
    \left\| v \right\|_\infty \leq \hat{L} \left( \left\| v \right\|^2 + \left\| \fd_x v \right\|^2 \right)^\frac{1}{2}
  \end{align*}
  holds, where $\hat{L} = \sqrt{2} \max \left\{ \sqrt{L}, 1/\sqrt{L} \right\}$.
\end{lemma}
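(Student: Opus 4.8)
The plan is to mimic the proof of the continuous embedding $H^1(0,L)\hookrightarrow L^\infty$ on the circle: combine the observation that a periodic function with controlled $L^2$-norm must be small somewhere with a discrete fundamental theorem of calculus. First I would choose $k_0\in\{1,\dots,K\}$ with $|v_{k_0}|=\|v\|_\infty$, and note that, since $\frac1K\sum_{k=1}^{K}|v_k|^2=\|v\|^2/L$, there is an index $m$ with $|v_m|^2\le\|v\|^2/L$; this takes care of the ``low-frequency'' part.

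Then I would estimate the increment $|v_{k_0}|^2-|v_m|^2$ by telescoping along the grid. Using the discrete product-type identity $|v_{j+1}|^2-|v_j|^2=\mathrm{Re}\bigl[(v_{j+1}-v_j)\overline{(v_{j+1}+v_j)}\bigr]$ together with $v_{j+1}-v_j=\Dx\,\fd_x v_j$, each one-step difference is bounded by $\Dx\,|\fd_x v_j|\,(|v_{j+1}|+|v_j|)$. Summing this bound along one of the two arcs of the periodic grid joining $m$ to $k_0$ gives $|v_{k_0}|^2-|v_m|^2\le\sum_{j\in E}\Dx\,|\fd_x v_j|\,(|v_{j+1}|+|v_j|)$, where $E$ is the corresponding edge set; adding the two such estimates, whose edge sets partition all $K$ edges of the grid, yields $2\bigl(|v_{k_0}|^2-|v_m|^2\bigr)\le\sum_{j=1}^{K}\Dx\,|\fd_x v_j|\,(|v_{j+1}|+|v_j|)$.

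Finally I would apply the Cauchy--Schwarz inequality to the last sum and use the periodicity $v_{k+K}=v_k$, which gives $\sum_{j=1}^{K}(|v_{j+1}|+|v_j|)^2\le4\sum_{j=1}^{K}|v_j|^2$ and hence $|v_{k_0}|^2\le|v_m|^2+\|\fd_x v\|\,\|v\|$. Combining this with $|v_m|^2\le\|v\|^2/L$, using Young's inequality $\|\fd_x v\|\,\|v\|\le\frac12\bigl(\|v\|^2+\|\fd_x v\|^2\bigr)$, and checking the elementary bounds $\frac1L+\frac12\le2\max\{L,1/L\}$ and $\frac12\le2\max\{L,1/L\}$ (both valid for every $L>0$) gives $\|v\|_\infty^2\le2\max\{L,1/L\}\bigl(\|v\|^2+\|\fd_x v\|^2\bigr)$, which is exactly the claim with $\hat L=\sqrt2\max\{\sqrt L,1/\sqrt L\}$.

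The only point that needs care is the telescoping step: on the periodic grid the two arcs from $m$ to $k_0$ must be chosen with complementary edge sets so that their contributions combine into a single sum over all $K$ edges (this is what produces the factor $\tfrac12$, and ultimately the stated constant), and the signs arising from $|v_{j+1}|^2-|v_j|^2$ along an arc must be absorbed by passing to absolute values before estimating. Everything else is routine.
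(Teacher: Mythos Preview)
Your argument is correct. The paper does not actually supply its own proof of this lemma; it merely quotes the result as Lemma~3.2 of the DVDM textbook~\cite{DVDM}. What you have written is precisely the standard proof of that result: pick an index $m$ where $|v_m|^2$ is below the average $\|v\|^2/L$, telescope $|v_{k_0}|^2-|v_m|^2$ along the two complementary arcs of the periodic grid, apply Cauchy--Schwarz, and finish with the elementary constant bound $1/L+1/2\le 2\max\{L,1/L\}$. Each step checks out, including the edge-partition argument and the final case analysis on $L\gtrless 1$; there is nothing to add.
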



\subsection{Schemes}

In order to consider energy-conservative schemes for the Zakharov equations, we assume that $\Ed{m}{k},\Nd{m}{k}$ and $\Vd{m}{k}$ are numerical solutions of such a scheme, $(\tEd{m}{k}, \tNd{m}{k}, \tVd{m}{k})   \coloneqq (E(m \Dt, k \Dx), N(m \Dt, k \Dx), V(m \Dt, k \Dx))$ are the solutions of \eqref{eq_Zak} and \eqref{eq_Vdef}, or
\begin{align}\label{eq_Zak2}
  \begin{cases}
    \mathrm{i} E_t + E_{xx} = NE, \\
    N_t = V_{xx}, \\
    V_t = N + \left\vert E \right\vert^2,\, V(0,0) = 0,
  \end{cases} \hspace{10mm} \left( t > 0,\, x\in(0,L) \right)
\end{align}
and $\EEd{m}{k} \coloneqq \Ed{m}{k} - \tEd{m}{k}, \ENd{m}{k} \coloneqq \Nd{m}{k} - \tNd{m}{k}$ and $\EVd{m}{k} \coloneqq \Vd{m}{k} - \tVd{m}{k}$ are the error terms.\\

In the following, we present two schemes that we address in this study.
One is the Glassey scheme (sometimes abbreviated as (G)), which has been proposed in the existing study \cite{Glassey}, and the other is the DVDM scheme (sometimes abbreviated as (D)), which is the main target of this study.
The reason why we pick up (G) among several existing schemes is that (G) is similar in form to (D) and (G) should work as a good example to evaluate the performance of (D).


\subsubsection{Glassey's scheme \rm(G)}

In \cite{Glassey}, Glassey proposed an energy-conservative scheme (G) for the Zakharov equations, which is defined as
\begin{align}\label{sch_Glassey}
  \begin{cases}
    \mathrm{i} \fd_t \Ed{m}{k} = -\cdd_x \fa_t \Ed{m}{k} + \left( \fa_t \Nd{m}{k} \right) \left( \fa_t \Ed{m}{k} \right), \\
    \fd_t \bd_t \Nd{m}{k} = \cdd_x \ca_t \Nd{m}{k} + \cdd_x \left( \left\vert \Ed{m}{k} \right\vert^2\right)
  \end{cases}
\end{align}
with the initial conditions
\begin{align}\label{eq_ini_Glassey}
  \Ed{0}{k} = E^0(k\Dx),\quad \Nd{0}{k} = N^0(k\Dx). \qquad (k=1,\ldots,K)
\end{align}
In addition, $\Nd{1}{}$ must also be set as an initial condition, which will be discussed in detail in Section 4.1.1. For analysis, $\Vd{m}{k}$ is defined by
\begin{align*}
  \fd_t \Nd{m}{k} = \cdd_x \Vd{m}{k}.
\end{align*}

For this scheme, second-order convergence is shown under the assumption $\Dt=\Dx$ in \cite{Chang}.
In addition, it is shown that the scheme is unconditionally solvable because it is a linearly-implicit scheme.
This means the following: if $\Ed{m}{}, \Nd{m}{}$ and $\Nd{m-1}{}$ are known, then $\Nd{m+1}{}$ is obtained by transforming the second formula in \eqref{sch_Glassey} as follows:
\begin{align}\label{sch_Glassey1}
  &\fd_t \bd_t \Nd{m}{k} = \cdd_x \ca_t \Nd{m}{k} + \cdd_x \left( \left\vert \Ed{m}{k} \right\vert^2\right) \nonumber\\
  &\Leftrightarrow \hspace{1mm} \left( I -  \frac{(\Dt)^2}{2} D_x^{\left\langle 2 \right\rangle} \right) \left( \Nd{m+1}{} + \Nd{m-1}{} + 2 \left\vert \Ed{m}{} \right\vert^2 \right) = 2 \Nd{m}{} + 2 \left\vert \Ed{m}{} \right\vert^2 \nonumber\\
  \begin{split}
    &\Leftrightarrow \hspace{2mm}
    \Nd{m+1}{} = - \Nd{m-1}{} - 2 \left\vert \Ed{m}{} \right\vert^2\\
    &\hspace{30mm} + 2 \left( I -  \frac{(\Dt)^2}{2} D_x^{\left\langle 2 \right\rangle} \right)^{-1} \left( \Nd{m}{} + \left\vert \Ed{m}{} \right\vert^2 \right),
  \end{split}
\end{align}
where we introduced new notations: $\left\vert \Ed{m}{} \right\vert \coloneqq \left( \left\vert \Ed{m}{k} \right\vert \right)_k^\top$ and the representation matrix of $\cdd_x$ is denoted by $D_x^{\left\langle 2 \right\rangle}$.
Note that $\Nd{1}{}$ is defined by other method as the initial condition.
On the other hand, if $\Ed{m}{}, \Nd{m}{}$ and $\Nd{m+1}{}$ are known, then $\Ed{m+1}{}$ is obtained by transforming the first formula in \eqref{sch_Glassey} as follows:
\begin{align}\label{sch_Glassey2}
  & \mathrm{i} \fd_t \Ed{m}{k} = -\cdd_x \fa_t \Ed{m}{k} + \left( \fa_t \Nd{m}{k} \right) \left( \fa_t \Ed{m}{k} \right) \nonumber\\
  &\Leftrightarrow \hspace{2mm} \left[ 2 \mathrm{i} I + \Dt \left\{ D_x^{\left\langle 2 \right\rangle} - \mathrm{diag} \left( \fa_t \Nd{m}{} \right) \right\} \right] \left( \Ed{m+1}{} + \Ed{m}{} \right) = 4 \mathrm{i} \Ed{m}{} \nonumber\\
  \begin{split}
    &\Leftrightarrow \hspace{2mm} \Ed{m+1}{} = -\Ed{m}{}\\
    &\hspace{20mm} + 4 \mathrm{i} \left[ 2 \mathrm{i} I + \Dt \left\{ D_x^{\left\langle 2 \right\rangle} - \mathrm{diag} \left( \fa_t \Nd{m}{} \right) \right\} \right]^{-1} \Ed{m}{}.
  \end{split}
\end{align}
Here, the existence of inverse matrices appearing in \eqref{sch_Glassey1} and \eqref{sch_Glassey2} is shown in the discussion in \cite{Glassey}.
Repeating these, Glassey's scheme can be solved by simply considering the linear equations.
In other words, it is fast to solve.

On the other hand, invariants of the scheme (G) are $\mathcal{N}^{(m)} = \left\| \Ed{m}{} \right\|^2$ and
\begin{align*}
  \begin{split}
    \mathcal{E}^{(m+1)}_{\mathrm{G}} &\coloneqq \left\| \fd_x \Ed{m+1}{} \right\|^2 + \frac{1}{2} \left( \fa_t \left\| \Nd{m}{} \right\|^2 + \left\| \fd_x \Vd{m}{} \right\|^2 \right)\\
    &\hspace{40mm} + \left\langle \fa_t \Nd{m}{}, \left\vert \Ed{m+1}{} \right\vert^2 \right\rangle,
  \end{split}
\end{align*}
which 
is not consistent with respect to time.
Therefore, it should be examined if such an inconsistency would affect the actual performance of the scheme.


\subsubsection{DVDM scheme \rm(D)}

By applying the discrete variational derivative method \cite{DVDM} to the Zakharov equations, we obtain
\begin{align}\label{sch_DVDM}
  \begin{cases}
    \mathrm{i} \fd_t \Ed{m}{k} = - \cdd_x \fa_t \Ed{m}{k} + \left( \fa_t \Nd{m}{k} \right) \left( \fa_t \Ed{m}{k} \right), \\
    \fd_t \Nd{m}{k} = \cdd_x \fa_t \Vd{m}{k},\\
    \fd_t \Vd{m}{k} = \fa_t \Nd{m}{k} + \fa_t \left\vert \Ed{m}{k} \right\vert^2.
  \end{cases}
\end{align}
Note that while the initial values $\Ed{0}{}$ and $\Nd{0}{}$ are determined by \eqref{eq_ini_Glassey} using $E^0$ and $N^0$, the initial value $\Vd{0}{}$ is not always strictly calculated. In the following analysis, for $V$ determined by \eqref{eq_Vdef}, we only assume that
\begin{align}\label{eq_Vassumption}
  \left\vert \Vd{0}{k} - V(0,k\Dx) \right\vert \leq C_V(\Dx)^2
\end{align}
holds for a certain constant $C_V > 0$ not depending on $\Dx$. This is a loose assumption since it is easy to perform more accurate approximate calculations. Depending on the situation, we may use the following form with $\Vd{m}{k}$ removed:
\begin{align}\label{sch_DVDM2}
  \begin{cases}
    \mathrm{i} \fd_t \Ed{m}{k} &= - \cdd_x \fa_t \Ed{m}{k} + \left( \fa_t \Nd{m}{k} \right) \left( \fa_t \Ed{m}{k} \right), \\
    \fd_t \bd_t \Nd{m}{k} &= \fa_t \ba_t \cdd_x \Nd{m}{k} + \fa_t \ba_t \cdd_x \left\vert \Ed{m}{k} \right\vert^2,
  \end{cases}
\end{align}
which is very similar to Glassey's scheme in \eqref{sch_Glassey}.

As mentioned in Introduction, this scheme was introduced in the textbook~\cite{DVDM} just to show an example for applying the DVDM to various equations, and no theoretical nor numerical investigation was done so far.

On the other hand, invariants of the scheme (D) are $\mathcal{N}^{(m)} = \left\| \Ed{m}{} \right\|^2$ and
\begin{align*}
  \mathcal{E}^{(m)}_{\mathrm{D}} \coloneqq \left\| \fd_x \Ed{m}{} \right\|^2 + \frac{1}{2} \left( \left\| \Nd{m}{} \right\|^2 + \left\| \fd_x \Vd{m}{} \right\|^2 \right) + \left\langle \Nd{m}{}, \left\vert \Ed{m}{} \right\vert^2 \right\rangle,
\end{align*}
which is consistent with respect to time.
Thus, we can expect that the solutions exhibit better behaviors compared to those in inconsistent schemes.
The scheme is, however, fully-implicit and demands much computational cost.
Thus our focus should go to how the expected good behaviors compensate the cost.


\section{Mathematical analysis}

We first perform a mathematical analysis of the DVDM scheme.
It consists of two parts: solvability and convergence.

Let us start with a lemma that is very important for the proof of the solvability and convergence.


\begin{lemma}\label{lem_sup}
  We assume that $\left( \Ed{0}{}, \Nd{0}{}, \Vd{0}{}\right), \cdots, \left( \Ed{m}{}, \Nd{m}{}, \Vd{m}{}\right)$ are solutions of the DVDM scheme in \eqref{sch_DVDM} and $E,N$ and $V$ are solutions of the Zakharov equations \eqref{eq_Zak} and \eqref{eq_Vdef} satisfying $E(t,\cdot), V(t,\cdot) \in C^1(0,L)$. Then, for sufficiently small $\Dx$,
  \begin{align*}
    \left\| \Ed{m}{} \right\|_2,
    \, \left\| \fd_x \Ed{m}{} \right\|_2,
    \, \left\| \Ed{m}{} \right\|_\infty,
    \, \left\| \Nd{m}{} \right\|_2,\, \left\| \fd_x \Vd{m}{} \right\|_2 \leq C_\infty
  \end{align*}
  holds for a certain constant $C_\infty > 0$ not depending on $\Dt, \Dx$ and $m$.
\end{lemma}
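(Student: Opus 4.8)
The plan is to run a discrete energy argument driven by the two \emph{exact} invariants of the scheme (D). Since $\mathcal{N}^{(m)} = \|\Ed{m}{}\|^2$ and $\mathcal{E}^{(m)}_{\mathrm{D}}$ are invariants, for every $j \in \{0,\dots,m\}$ we have $\|\Ed{j}{}\| = \|\Ed{0}{}\|$ together with
\[
  \left\| \fd_x \Ed{j}{} \right\|^2 + \tfrac12 \left\| \Nd{j}{} \right\|^2 + \tfrac12 \left\| \fd_x \Vd{j}{} \right\|^2 = \mathcal{E}^{(0)}_{\mathrm{D}} - \left\langle \Nd{j}{}, \left\vert \Ed{j}{} \right\vert^2 \right\rangle .
\]
So the lemma reduces to two things: (i) $\mathcal{N}^{(0)}$ and $\mathcal{E}^{(0)}_{\mathrm{D}}$ are bounded by a constant independent of $\Dx$ (hence of $\Dt$ and $m$), and (ii) the sign-indefinite coupling term on the right can be absorbed into the left-hand side.

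Step (i) is where ``$\Dx$ sufficiently small'' is used. By the definition \eqref{eq_ini_Glassey} of $\Ed{0}{}$ and $\Nd{0}{}$ and the regularity hypotheses on $E$ and $V$ (which, via \eqref{eq_Vdef}, also control $N^0$), each of $\|\Ed{0}{}\|$, $\|\fd_x \Ed{0}{}\|$, $\|\Nd{0}{}\|$ and $\langle \Nd{0}{}, |\Ed{0}{}|^2 \rangle$ is a Riemann-type discretisation of the corresponding integral in $\mathcal{N}(0)$ or $\mathcal{E}(0)$, and therefore converges as $\Dx \to 0$; moreover the loose assumption \eqref{eq_Vassumption} gives $\fd_x \Vd{0}{k} = V_x(0,\xi_k) + O(\Dx)$, so $\|\fd_x \Vd{0}{}\|$ converges to $\|V_x(0,\cdot)\|$ as well. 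Hence for $\Dx$ small all of these are bounded by constants depending only on the continuous data.

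Step (ii) is the heart of the proof. I would bound the cross term by Cauchy--Schwarz, $\left\vert \left\langle \Nd{j}{}, |\Ed{j}{}|^2 \right\rangle \right\vert \leq \|\Nd{j}{}\| \, \|\Ed{j}{}\|_4^2$, and then invoke a \emph{discrete Gagliardo--Nirenberg inequality} of the form $\|v\|_4^2 \leq C\bigl( \|v\|^2 + \|v\|^{3/2} \|\fd_x v\|^{1/2} \bigr)$. This follows from $\|v\|_4^4 \leq \|v\|_\infty^2 \|v\|^2$ combined with the pointwise estimate $\|v\|_\infty^2 \leq \tfrac1L \|v\|^2 + 2 \|v\| \, \|\fd_x v\|$ --- which is exactly what the telescoping/summation-by-parts computation behind Lemma~\ref{lem_Sobolev} yields --- followed by a square root. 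The essential feature is that $\|\fd_x v\|$ enters with exponent $\tfrac12 < 2$; this mimics the continuous energy estimate for the Zakharov system and is what makes the argument close \emph{without any smallness assumption} on the data. I expect this to be the main obstacle: a crude use of Cauchy--Schwarz and Lemma~\ref{lem_Sobolev} as literally stated would leave $\|\fd_x \Ed{j}{}\|$ to the first power and force such a restriction.

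It then remains to close the estimate. With $C_0 \coloneqq \|\Ed{0}{}\| = \|\Ed{j}{}\|$, the previous step gives $\left\vert \left\langle \Nd{j}{}, |\Ed{j}{}|^2 \right\rangle \right\vert \leq C_1 \|\Nd{j}{}\| + C_1 \|\Nd{j}{}\| \, \|\fd_x \Ed{j}{}\|^{1/2}$ with $C_1$ depending only on $C_0$ and $L$; Young's inequality (peeling off small multiples of $\|\Nd{j}{}\|^2$ and of $\|\fd_x \Ed{j}{}\|^2$ and absorbing the rest into a constant) moves this into the left-hand side of the identity above and yields
\[
  \tfrac12 \left\| \fd_x \Ed{j}{} \right\|^2 + \tfrac14 \left\| \Nd{j}{} \right\|^2 + \tfrac12 \left\| \fd_x \Vd{j}{} \right\|^2 \leq \mathcal{E}^{(0)}_{\mathrm{D}} + C_2 ,
\]
so $\|\fd_x \Ed{j}{}\|$, $\|\Nd{j}{}\|$ and $\|\fd_x \Vd{j}{}\|$ are bounded uniformly in $\Dt, \Dx, j$. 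Finally, applying Lemma~\ref{lem_Sobolev} to $\Ed{m}{}$ bounds $\|\Ed{m}{}\|_\infty$ through $\|\Ed{m}{}\|$ and $\|\fd_x \Ed{m}{}\|$, and setting $C_\infty$ to be the maximum of the five resulting constants completes the proof. Besides the cross-term estimate, the only other point requiring care is that the deliberately weak hypothesis \eqref{eq_Vassumption} still controls $\|\fd_x \Vd{0}{}\|$, which it does because differencing an $O((\Dx)^2)$ perturbation costs just one power of $\Dx$.
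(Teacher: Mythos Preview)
Your proposal is correct and follows essentially the same route as the paper: exploit the exact conservation of $\mathcal{N}^{(m)}$ and $\mathcal{E}^{(m)}_{\mathrm D}$, bound the initial data, control the indefinite cross term $\langle \Nd{j}{},|\Ed{j}{}|^2\rangle$ via a discrete Gagliardo--Nirenberg inequality and Young, and finish with Lemma~\ref{lem_Sobolev}. The only cosmetic difference is that the paper cites the discrete interpolation inequality $\|v\|_4 \le C'(\|\fd_x v\|^{1/4}\|v\|^{3/4}+\|v\|)$ from~\cite{Sun} rather than deriving it from the pointwise estimate $\|v\|_\infty^2 \le L^{-1}\|v\|^2 + 2\|v\|\|\fd_x v\|$ as you do; your extra care with \eqref{eq_Vassumption} for $\|\fd_x \Vd{0}{}\|$ is in fact slightly more explicit than the paper's treatment.
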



\begin{proof}
  From $\fd_t \mathcal{N}^{(m)} = \fd_t \mathcal{E}^{(m)}_{\mathrm{D}} = 0$, we obtain $\left\| \Ed{m}{} \right\| = \left\| \Ed{0}{} \right\|$ and $\mathcal{E}^{(m)}_{\mathrm{D}} = \mathcal{E}^{(0)}_{\mathrm{D}}$ for all $m$. Here, for the former, we see that
  \begin{equation*}
    \left\| \Ed{m}{} \right\| = \left\| \Ed{0}{} \right\| \leq \sqrt{L} \left\| \Ed{0}{} \right\|_\infty \leq \sqrt{L} \sup_{x\in[0,L]} \left\vert E(0,x) \right\vert \eqqcolon c_1
  \end{equation*}
  which does not depend on $\Dt,\Dx$ and $m$.
  Since
  \begin{align}\label{eq_fdx}
    \left\vert \fd_x E(t,k\Dx) \right\vert \leq \left\vert E_x(t,(k+\varepsilon)\Dx)\right\vert \leq \sup_{x\in[0,L]} \left\vert E_x(t,x) \right\vert
  \end{align}
  holds for all $t$ and some $\varepsilon \in (0,1)$ from the mean value theorem, we obtain
  \begin{align*}
    \mathcal{E}^{(0)}_{\mathrm{D}} &= \left\| \fd_x \tEd{0}{} \right\|^2 + \frac{1}{2} \left( \left\| \tNd{0}{} \right\|^2 + \left\| \fd_x \tVd{0}{} \right\|^2 \right) + \left\langle \tNd{0}{}, \left\vert \tEd{0}{} \right\vert^2 \right\rangle \leq c_2,
  \end{align*}
  for some $c_2$ not depending on $\Dt,\Dx$ and $m$.
  Thus, from $\mathcal{E}^{(m)}_{\mathrm{D}} \leq c_2$, we see that
  \begin{align*}
    \left\| \fd_x \Ed{m}{} \right\|^2 + \frac{1}{2} \left( \left\| \Nd{m}{} \right\|^2 + \left\| \fd_x \Vd{m}{} \right\|^2 \right) &\leq c_2 - \left\langle \frac{1}{2} \Nd{m}{}, 2\left\vert \Ed{m}{} \right\vert^2 \right\rangle\\
    &\leq c_2 + \frac{1}{4} \left\| \Nd{m}{} \right\|^2 + \left\| \Ed{m}{} \right\|_4^4,
  \end{align*}
  which implies
  \begin{align*}
    \left\| \fd_x \Ed{m}{} \right\|^2 + \frac{1}{4} \left\| \Nd{m}{} \right\|^2 + \frac{1}{2} \left\| \fd_x \Vd{m}{} \right\|^2 \leq c_2 + \frac{1}{2} \left\| \Ed{m}{} \right\|_4^4.
  \end{align*}

  By using Lemma 3.1 in \cite{Sun}:
  \begin{align*}
    \left\| v \right\|_p &\leq C' \left( \left\| \fd_x v \right\|^{\frac{1}{2}-\frac{1}{p}} \left\| v\right\|^{\frac{1}{2}+\frac{1}{p}} + \left\| v \right\|\right)
  \end{align*}
  for some $C'$ not depending on $p, \Dt$ and $\Dx$, and using the facts
  \begin{align*}
    (a+b)^4 \leq 8(a^4+b^4),
    \quad 2ab \leq a^2+b^2, \qquad (\forall a, b \in \mathbb{R})
  \end{align*}
  we obtain
  \begin{align*}
    \left\| \Ed{m}{} \right\|_4^4 &\leq C'^4 \left( \left\| \fd_x \Ed{m}{} \right\|^{\frac{1}{4}} \left\| \Ed{m}{} \right\|^{\frac{3}{4}} + \left\| \Ed{m}{} \right\| \right)^4\\
    &\leq 8C'^4 \left( \left\| \fd_x \Ed{m}{} \right\| \left\| \Ed{m}{} \right\|^3 + \left\| \Ed{m}{} \right\|^4 \right)\\
    &= 2 \left\| \fd_x \Ed{m}{} \right\| \cdot 4C'^4 \left\| \Ed{m}{} \right\|^3 + 8C'^4 \left\| \Ed{m}{} \right\|^4\\
    &\leq \left\| \fd_x \Ed{m}{} \right\|^2 + 16C'^8 \left\| \Ed{m}{} \right\|^6 +8C'^4 \left\| \Ed{m}{} \right\|^4,
  \end{align*}
  which implies
  \begin{align*}
    \left\| \fd_x \Ed{m}{} \right\|^2 + \frac{1}{4} \left\| \Nd{m}{} \right\|^2 + \frac{1}{2} \left\| \fd_x \Vd{m}{} \right\|^2 \leq c_3 +  \frac{1}{2} \left\| \fd_x \Ed{m}{} \right\|^2.
  \end{align*}
  for some $c_3$ not depending on $\Dt, \Dx$ and $m$. This means
  \begin{align*}
    \left\| \fd_x \Ed{m}{} \right\|^2 + \frac{1}{2} \left\| \Nd{m}{} \right\|^2 + \left\| \fd_x \Vd{m}{} \right\|^2 \leq 2c_3.
  \end{align*}
  From Lemma~\ref{lem_Sobolev}, we obtain the conclusion.
\end{proof}

The solvability is stated as follows:

\begin{theorem}[Solvability]\label{thm_solvability}
  Let us assume that $E,N$ and $V$ are solutions of the Zakharov equations \eqref{eq_Zak} and \eqref{eq_Vdef} satisfying $E(t,\cdot), V(t,\cdot) \in C^1(0,L)$, $p > 3$, and $r \geq C_\infty$ is a certain constant, where $C_\infty$ is obtained in Lemma~\ref{lem_sup}.
  If $\Dt$ and $\Dx$ are sufficiently small and satisfy $\Dt < \min\left\{\Dx, \varepsilon_1(p, r), \varepsilon_2(p, r)\right\}$ with
  \begin{align*}
    \varepsilon_1(p, r) &\coloneqq \frac{2(p-3)\Dx}{ 2 + 2p(p\hat{L} + 1)r + p(p+1)r (\Dx)^{1/2} + (2p^2\hat{L} + p + 1)r \Dx },\\
    \varepsilon_2(p, r) &\coloneqq \frac{\Dx}{r \left( 2p\hat{L} + 1 + (p+1/2) (\Dx)^{1/2} + ( 2p\hat{L} + 1/2 ) \Dx \right) },
  \end{align*}
  then, the DVDM scheme \eqref{sch_DVDM} has a unique solution $\left( \Ed{m}{}, \Nd{m}{}, \Vd{m}{}\right)$ for $m = 1,2, \ldots, M$.
\end{theorem}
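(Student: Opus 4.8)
The plan is to prove existence by a fixed-point (Brouwer) argument on the single unknown vector for the $(m+1)$-th step, and then prove uniqueness by a direct contraction-type estimate; the quantitative smallness conditions $\Dt < \varepsilon_1(p,r)$ and $\Dt < \varepsilon_2(p,r)$ will emerge precisely as the conditions that make these two steps go through. We induct on $m$: assuming $(\Ed{0}{},\Nd{0}{},\Vd{0}{}),\dots,(\Ed{m}{},\Nd{m}{},\Vd{m}{})$ are already constructed and satisfy the a priori bound $C_\infty$ of Lemma~\ref{lem_sup}, we construct the next step. First I would reduce the three equations of \eqref{sch_DVDM} to a single fixed-point equation: from the second and third equations, $\Nd{m+1}{}$ and $\Vd{m+1}{}$ can be written affinely in terms of $\Ed{m+1}{}$ (via $\fd_x$, $\cdd_x$ and the known step-$m$ data), so the whole system collapses to an equation $\Ed{m+1}{} = \Phi(\Ed{m+1}{})$, or equivalently a zero-finding problem $F(\Ed{m+1}{}) = 0$ on $\CC^K$, where $F$ is a continuous (indeed polynomial) map. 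Then I would invoke the standard consequence of Brouwer's theorem: if $\mathrm{Re}\langle F(z), z - z_0\rangle > 0$ on the sphere $\|z - z_0\| = \rho$ for a suitable center $z_0$ (the natural choice is $z_0 = \Ed{m}{}$ or a predictor thereof) and radius $\rho$, then $F$ has a zero inside the ball. Establishing that inequality is where the first batch of terms in $\varepsilon_1$ comes from: one pairs $F(z)$ with $z - z_0$, uses the skew-symmetry of $\fd_x$ (Lemma~\ref{lem_ope}(4)) to kill the leading linear dispersive term, uses Lemma~\ref{lem_ope}(5)--(6) and the discrete Sobolev Lemma~\ref{lem_Sobolev} (whence the $\hat L$) to control the nonlinear coupling $(\fa_t N)(\fa_t E)$ and the $|E|^2$ source, and bounds everything in terms of $r \geq C_\infty$. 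The factors $2p\hat L$, $(\Dx)^{1/2}$ and $\Dx$ appearing in $\varepsilon_1,\varepsilon_2$ reflect, respectively, the $L^\infty$ estimates, the $L^p$--$L^2$ interpolation used to handle $\|E\|_p$-type terms on the ball of radius $r$, and the conversion $\Dt \leq \Dt/\Dx \cdot \Dx$ used to absorb a $\cdd_x$-induced factor $1/(\Dx)^2$ against the two $\fd_t$'s (so a net $(\Dt)^2/(\Dx)^2 \le \Dt/\Dx \cdot (\Dt/\Dx) \le \dots$, bookkept so that the condition reads as a bound on $\Dt$ given $\Dx$).

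For uniqueness, suppose $(\Ed{m+1}{},\Nd{m+1}{},\Vd{m+1}{})$ and $(\bar E, \bar N, \bar V)$ both solve the step-$(m+1)$ equations with the same step-$m$ data. Subtracting the equations and denoting the differences by $e_E, e_N, e_V$, I would form the inner product of the difference of the first equation with $\fa_t e_E = \tfrac12 e_E$ (the step-$m$ part cancels), take real parts, and again use skew-symmetry to remove the dispersive contribution; the remaining terms are differences of the quadratic/cubic nonlinearities, each of which is Lipschitz on the ball of radius $r$ with a constant of size $O(r)$ times the appropriate power of $\hat L$, $(\Dx)^{1/2}$ or $\Dx$. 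Coupling this with the analogous differences of the $N$- and $V$-equations, one obtains $\|e_E\|^2 + \|e_N\|^2 + \|\fd_x e_V\|^2 \leq (\text{const})\cdot \Dt \cdot (\|e_E\|^2 + \dots)$, and the hypothesis $\Dt < \varepsilon_2(p,r)$ is exactly what forces the constant on the right to be $<1$, hence all differences vanish.

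The main obstacle I anticipate is the bookkeeping of the nonlinear terms: the coupling $(\fa_t N)(\fa_t E)$ and the source $\fa_t|E|^2$ must be estimated uniformly over the \emph{a priori} ball (radius $r \ge C_\infty$) rather than just at the true solution, and one must be careful that the iterate produced by Brouwer's theorem itself lies in that ball so that Lemma~\ref{lem_sup} (via the conserved quantities $\mathcal{N}^{(m+1)}$, $\mathcal{E}^{(m+1)}_{\mathrm D}$) applies to continue the induction — this is why the radius $\rho$ of the Brouwer ball and the threshold $r$ have to be chosen compatibly, and why the statement keeps $r$ as a free parameter $\ge C_\infty$. The rest is routine: assembling the constants, checking that $\varepsilon_1,\varepsilon_2 > 0$ for $p > 3$, and noting that "sufficiently small $\Dt,\Dx$" plus $\Dt < \min\{\Dx,\varepsilon_1,\varepsilon_2\}$ is a nonvacuous regime.
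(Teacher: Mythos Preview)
Your approach differs substantially from the paper's, and in its current form does not prove the theorem as stated.

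The paper does not use Brouwer's theorem plus a separate uniqueness argument. It recasts the step $m\to m{+}1$ as a fixed-point problem for the \emph{pair} $\bigl(E^{(m+1)}, N^{(m+1/2)}\bigr)$ (where $N^{(m+1/2)}\coloneqq\fa_t N^{(m)}$), defines an explicit map $\phi^{(m)}=(\phi^{(m)}_E,\phi^{(m)}_N)$ built from the resolvents $\bigl(\mathrm{i}I+\tfrac{\Dt}{2}D_x^{\langle 2\rangle}\bigr)^{-1}$ and $\bigl(I-\tfrac{(\Dt)^2}{4}D_x^{\langle 2\rangle}\bigr)^{-1}$, and applies the Banach contraction mapping theorem on the ball $B(pr)$ in the norm $\|(E,N)\| = \|E\|_2+\|\fd_x E\|_2+\|N\|_2$. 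The dispersive term is absorbed not by skew-symmetry in an inner product but by the operator-norm bounds $\bigl\|\bigl(\mathrm{i}I+\tfrac{\Dt}{2}D_x^{\langle 2\rangle}\bigr)^{-1}\bigr\|\le 1$ and $\bigl\|\bigl(I-\tfrac{(\Dt)^2}{4}D_x^{\langle 2\rangle}\bigr)^{-1}\bigr\|\le 1$, read off from the spectrum of $D_x^{\langle 2\rangle}$. The condition $\Dt<\varepsilon_1$ is exactly the self-map condition $\phi^{(m)}(B(pr))\subseteq B(pr)$; the condition $\Dt<\varepsilon_2$ is exactly the condition that the Lipschitz constant of $\phi^{(m)}$ on $B(pr)$ be $<1$. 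Existence and uniqueness come in one stroke.

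Your energy-method route, even if it can be pushed through, would produce different smallness thresholds, so it does not establish the theorem with the explicit $\varepsilon_1(p,r)$ and $\varepsilon_2(p,r)$ that are part of the statement. You also misidentify the origin of the $(\Dx)^{1/2}$ factors: they do not come from $L^p$--$L^2$ interpolation but from the crude inverse inequality $\|v\|_\infty\le(\Dx)^{-1/2}\|v\|_2$, invoked when estimating $\|\fd_x(N\cdot E)\|$ via the discrete product rule (one factor lands in $\|\cdot\|_\infty$, and since only $\|N\|_2$ is in the norm, no Sobolev control of $N$ is available). The factor $p\hat L$ enters because on $B(pr)$ the discrete Sobolev lemma gives $\|E\|_\infty\le p\hat L r$; and the requirement $p>3$ is simply that the three summands of $\|\phi^{(m)}(E,N)\|$ are each bounded by $r+O(\Dt)$, so the ball radius must exceed $3r$ to close the self-map estimate.
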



\begin{proof}
  It suffices to show that, if $\left( \Ed{0}{}, \Nd{0}{}, \Vd{0}{}\right), \cdots, \left( \Ed{m}{}, \Nd{m}{}, \Vd{m}{}\right)$ are solutions of the DVDM scheme in \eqref{sch_DVDM}, then $\Ed{m+1}{}$ and $\Nd{m+\frac{1}{2}}{} \coloneqq \fa_t \Nd{m}{}$ uniquely exist, for all $m = 0,1,\ldots,M-1$.
  In this time, the unique existence of $\left( \Ed{m+1}{}, \Nd{m+1}{}, \Vd{m+1}{} \right)$ follows from $\Nd{m+1}{} = 2\Nd{m+\frac{1}{2}}{} - \Nd{m}{}$ and
  \begin{align*}
    \Vd{m+1}{k} = \Vd{m}{k} + \Dt \Nd{m+\frac{1}{2}}{k} + \dfrac{\Dt}{2} \left( \left\vert \Ed{m+1}{k} \right\vert^2 + \left\vert \Ed{m}{k} \right\vert^2 \right),
  \end{align*}
  which is derived from \eqref{sch_DVDM}.

  In order to show the unique existence of $\left( \Ed{m+1}{}, \Nd{m+\frac{1}{2}}{} \right)$ by applying a fixed point theorem, we transform the definition of the DVDM scheme \eqref{sch_DVDM} as
  \begin{align*}
    &\begin{cases}
      \mathrm{i} \fd_t \Ed{m}{k} = - \cdd_x \fa_t \Ed{m}{k} + \left( \fa_t \Nd{m}{k} \right) \left( \fa_t \Ed{m}{k} \right), \\
      \fd_t \Nd{m}{k} = \cdd_x \fa_t \Vd{m}{k},\\
      \fd_t \Vd{m}{k} = \fa_t \Nd{m}{k} + \fa_t \left\vert \Ed{m}{k} \right\vert^2
    \end{cases}\\
    &\Leftrightarrow
    \begin{cases}
      \Ed{m+1}{} = \left( \mathrm{i} I + \dfrac{\Dt}{2} D_x^{\left\langle 2 \right\rangle} \right)^{-1}  \left( \mathrm{i} I - \dfrac{\Dt}{2} D_x^{\left\langle 2 \right\rangle} \right) \Ed{m}{}\\
      \hspace{19mm} +  \dfrac{\Dt}{2} \left( \mathrm{i} I + \dfrac{\Dt}{2} D_x^{\left\langle 2 \right\rangle} \right)^{-1}  \Nd{m+\frac{1}{2}}{} \left( \Ed{m+1}{} + \Ed{m}{} \right), \\
      \Nd{m+\frac{1}{2}}{} = \left( I - \dfrac{(\Dt)^2}{4} D_x^{\left\langle 2 \right\rangle} \right)^{-1} \biggl\{ \Nd{m}{} + \dfrac{\Dt}{2} \bd_x \fd_x \Vd{m}{}\\
      \hspace{50mm}+ \dfrac{(\Dt)^2}{8} D_x^{\left\langle 2 \right\rangle} \left( \left\vert \Ed{m+1}{} \right\vert^2 + \left\vert \Ed{m}{} \right\vert^2 \right) \biggr\}.
    \end{cases}
  \end{align*}
  Then, we define the following maps:
  \begin{align*}
    \begin{cases}
      \phi^{(m)}_E (E,N) \coloneqq \left( \mathrm{i} I + \dfrac{\Dt}{2} D_x^{\left\langle 2 \right\rangle} \right)^{-1}  \left( \mathrm{i} I - \dfrac{\Dt}{2} D_x^{\left\langle 2 \right\rangle} \right) \Ed{m}{}\\
      \hspace{23mm} +  \dfrac{\Dt}{2} \left( \mathrm{i} I + \dfrac{\Dt}{2} D_x^{\left\langle 2 \right\rangle} \right)^{-1} N \left( E + \Ed{m}{} \right), \\
      \phi^{(m)}_N (E,N) \coloneqq \left( I - \dfrac{(\Dt)^2}{4} D_x^{\left\langle 2 \right\rangle} \right)^{-1} \Biggl\{ \Nd{m}{} + \dfrac{\Dt}{2} \bd_x \fd_x \Vd{m}{}\\
      \hspace{52mm} + \dfrac{(\Dt)^2}{8} D_x^{\left\langle 2 \right\rangle} \left( \left\vert E \right\vert^2 + \left\vert \Ed{m}{} \right\vert^2 \right) \Biggr\}
    \end{cases}
  \end{align*}
  and $\phi^{(m)} (E,N) \coloneqq \left( \phi^{(m)}_E (E,N), \phi^{(m)}_N (E,N) \right)$, which satisfy that
  \begin{align*}
    \phi^{(m)} (E^\ast,N^\ast) = (E^\ast,N^\ast) \quad \Leftrightarrow \quad E^\ast = \Ed{m+1}{}, N^\ast = \Nd{m+\frac{1}{2}}{}.
  \end{align*}

  Therefore, it suffices to apply the fixed point theorem to the function $\phi^{(m)}$, and for that purpose, we show that $\phi^{(m)}$ is a contraction mapping on some sphere $B(pr) \coloneqq \left\{ (E, N) \mid \left\| (E, N) \right\| \leq pr \right\}$, where $\left\| (E, N) \right\| \coloneqq \left\| E \right\|_2 + \left\| \fd_x E \right\|_2 + \left\| N \right\|_2$ is a norm. Note that, if $(E,N) \in B(pr)$, then $\left\| E \right\|_\infty \leq p\hat{L}r$.\\

  First let us evaluate norms of some matrices appearing in the above maps.
  Let $\lambda_1, \ldots, \lambda_K$ be the eigenvalues of $D_x^{\left\langle 2 \right\rangle}$ and $\sigma_1,\ldots,\sigma_K$ be the singular values of the normal matrix $\mathrm{i} I + \Dt D_x^{\left\langle 2 \right\rangle}/2$, then $\left\{ \sigma_k \right\}_k$ is equal to $\left\{ \vert i + \lambda_k \Dt/2 \vert \right\}_k$, i.e., $\left\vert \sigma_k \right\vert \geq 1$ for all $k = 1,\ldots,K$, and accordingly $\| ( \mathrm{i} I + \Dt D_x^{\left\langle 2 \right\rangle}/2 )^{-1} \| \leq 1$ holds.
  The eigenvalues of $( \mathrm{i} I + \Dt D_x^{\left\langle 2 \right\rangle}/2 )^{-1} ( \mathrm{i} I - \Dt D_x^{\left\langle 2 \right\rangle}/2 )$ are $\left( \mathrm{i} + \lambda_k \Dt/2\right)^{-1} \left( \mathrm{i} - \lambda_k \Dt/2 \right)$ $(k = 1,\ldots,K)$,
  whose absolute values are obviously all $1$.
  This immediately implies $\| ( \mathrm{i} I + \Dt D_x^{\left\langle 2 \right\rangle}/2 )^{-1} ( \mathrm{i} I - \Dt D_x^{\left\langle 2 \right\rangle}/2 ) \|$ = 1.
  It is also easy to see that $\| ( I - (\Dt)^2 D_x^{\left\langle 2 \right\rangle}/4 )^{-1} \| \leq 1$ holds.

  Under these observations, now let us evaluate $\phi^{(m)}$.
  First, we show that, if $\Dt \leq \Dx$ and $\Dt \leq \varepsilon_1(p, r)$, then $\phi^{(m)} \left( B(pr) \right) \subseteq B(pr)$ holds.
  In fact, for $(E,N) \in B(pr)$, since
  \begin{align*}
    \left\| \phi^{(m)}_E (E,N) \right\| &\leq \left\| \left( \mathrm{i}I + \dfrac{\Dt}{2} D_x^{\left\langle 2 \right\rangle} \right)^{-1} \left( \mathrm{i}I - \dfrac{\Dt}{2} D_x^{\left\langle 2 \right\rangle} \right) \right\| \left\| \Ed{m}{} \right\|\\
    &\hspace{5mm} + \dfrac{\Dt}{2} \left\| \left( \mathrm{i}I + \dfrac{\Dt}{2} D_x^{\left\langle 2 \right\rangle} \right)^{-1} \right\| \left\| N \right\| \left\| E + \Ed{m}{} \right\|_\infty \\
    &\leq r + \frac{\Dt}{2} p(p\hat{L} + 1)r^2,\\
    \left\| \fd_x \phi^{(m)}_E (E,N) \right\| &\leq \left\| \fd_x \Ed{m}{} \right\|\\
    &\hspace{5mm} + \dfrac{\Dt}{2} \left( \left\| \fd_x N \right\| \left\| E + \Ed{m}{} \right\|_\infty + \frac{\left\| N \right\| \left\| \fd_x \left( E + \Ed{m}{} \right) \right\|}{(\Dx)^{1/2}} \right) \\
    &\leq r + \frac{\Dt}{2\Dx} \left( 2p(p\hat{L} + 1) + p(p+1) (\Dx)^{1/2} \right) r^2,\\
    \left\| \phi^{(m)}_N (E,N) \right\| &\leq \left\| \left( I - \dfrac{(\Dt)^2}{4} D_x^{\left\langle 2 \right\rangle} \right)^{-1} \right\| \Biggl\{ \left\| \Nd{m}{} \right\| + \dfrac{\Dt}{2} \left\| \bd_x \right\| \left\| \fd_x \Vd{m}{} \right\|\\
    & \hspace{5mm} + \dfrac{(\Dt)^2}{8} \left\| \bd_x \right\| \left( 2 \left\| E \right\|_\infty \left\| \fd_x E \right\| + 2 \left\| \Ed{m}{} \right\|_\infty \left\| \fd_x \Ed{m}{} \right\| \right) \Biggr\}\\
    &\leq r + \dfrac{\Dt}{2\Dx} \left( 2 + (p^2\hat{L} + 1)r \Dx \right)r ,
  \end{align*}
  we see that the sufficient condition for $\phi^{(m)} (E,N) \in B(pr)$ to be
  \begin{align*}
    \left\| \phi^{(m)} (E,N) \right\|
    &\leq 3r +
    \dfrac{r\Dt}{2\Dx} \left( 2 + 2p(p\hat{L} + 1)r + p(p+1)r \sqrt{\Dx} + (2p^2\hat{L} + p + 1)r\Dx \right)\\
    &\leq pr\\
    &\Leftrightarrow \, \Dt \leq \frac{2(p-3)\Dx}{ 2 + 2p(p\hat{L} + 1)r + p(p+1)r (\Dx)^{1/2} + (2p^2\hat{L} + p + 1)r \Dx },
  \end{align*}
  where the most right hand side is denoted as $\varepsilon_1(p, r)$.

  Next, we show that, if $\Dt \leq \Dx$ and $\Dt \leq \varepsilon_2(p, r)$, then $\phi^{(m)}$ is a contraction mapping.
  For $(E_1,N_1), (E_2,N_2) \in B(pr)$, since
  \begin{align*}
    N_1 E_1 - N_2 E_2 &= N_1 ( E_1 - E_2 ) + \left( N_1 - N_2 \right) E_2,\\
    \left\vert E_1 \right\vert^2 - \left\vert E_2 \right\vert^2 &= \mathrm{Re} \left[ \left( \bar{E_1} + \bar{E_2} \right) \left( E_1 - E_2 \right) \right],
  \end{align*}
  we see that
  \begin{align*}
    &\left\| \phi^{(m)}_E (E_1,N_1) - \phi^{(m)}_E (E_2,N_2) \right\|\\ &\leq \dfrac{\Dt}{2} \Biggl\{ \left\| N_1 \right\| \left\| E_1 - E_2 \right\|_\infty + \left\| N_1 - N_2 \right\| \left\| E_2 +\Ed{m}{} \right\|_\infty \Biggr\}\\
    &\leq \frac{\Dt}{2} \left( 2p\hat{L} + 1 \right) r \left\| (E_1,N_1) - (E_2,N_2) \right\|,\\
    &\left\| \fd_x \left( \phi^{(m)}_E (E_1,N_1) - \phi^{(m)}_E (E_2,N_2) \right) \right\|\\ &\leq \dfrac{\Dt}{2} \Biggl\{ \left\| \fd_x N_1 \right\| \left\| E_1 - E_2 \right\|_\infty + (\Dx)^{-1/2} \left\| N_1 \right\| \left\| \fd_x \left( E_1 - E_2 \right) \right\| \\
    &\hspace{0mm} + \left\| \fd_x \left( N_1 - N_2 \right) \right\| \left\| E_2 +\Ed{m}{} \right\|_\infty + (\Dx)^{-1/2} \left\| N_1 - N_2 \right\| \left\| \fd_x \left( E_2 +\Ed{m}{}  \right) \right\| \Biggr\}\\
    &\leq \frac{\Dt}{\Dx} \left[ 2p\hat{L} + 1 + (p+1/2) (\Dx)^{1/2} \right] r \left\| (E_1,N_1) - (E_2,N_2) \right\|,\\
    &\left\| \phi^{(m)}_N (E_1,N_1) - \phi^{(m)}_N (E_2,N_2) \right\|\\ &\leq \dfrac{(\Dt)^2}{8} \left\| \bd_x \right\| \left( \left\| \fd_x \left( \bar{E_1} + \bar{E_2} \right) \right\| \left\| E_1 - E_2 \right\|_\infty + \left\| \bar{E_1} + \bar{E_2} \right\|_\infty \left\| \fd_x \left( E_1 - E_2 \right) \right\| \right)\\
    &\leq \frac{\Dt}{2} \cdot p\hat{L} r \left\| (E_1,N_1) - (E_2,N_2) \right\|.
  \end{align*}
  Therefore,
  \begin{align*}
    &\, \left\| \phi^{(m)} (E_1,N_1) - \phi^{(m)} (E_2,N_2) \right\|\\
    &\leq \frac{\Dt}{\Dx} \left( 2p\hat{L} + 1 + (p+1/2) (\Dx)^{1/2} + ( 2p\hat{L} + 1/2 ) \Dx \right) r \left\| (E_1,N_1) - (E_2,N_2) \right\|\\
    &< \left\| (E_1,N_1) - (E_2,N_2) \right\|\\
    &\Leftrightarrow \hspace{2mm} \Dt < \frac{\Dx}{r \left( 2p\hat{L} + 1 + (p+1/2) (\Dx)^{1/2} + ( 2p\hat{L} + 1/2 ) \Dx \right) } \eqqcolon \varepsilon_2(p, r).
  \end{align*}

  Thus, if $\Dt < \min\{ \Dx, \varepsilon_1(p, r), \varepsilon_2(p, r) \}$, $\phi^{(m)}$ is the desired contraction mapping and has only one fixed point.
\end{proof}

Recall that the Glassey scheme in \eqref{sch_Glassey} does not need any assumption for solvability because it is a linearly-implicit scheme.
Also, as discussed below, the unnecessary restriction $\Dt = \Dx$ can be removed in the discussion of Glassey's argument of convergence estimate. This means that there is strictly no restriction in Glassey's scheme.
On the other hand, in the DVDM scheme, we need the assumption $\Dt = \mathrm{O} \left( \Dx \right)$, which comes from the solvability.
We feel it rather comes from a technical reason of the proof (generally it is quite likely in similar mathematical analyses that too severe step size restriction appears due to the evaluation of finite difference operators).
In fact, as will be shown later, the DVDM scheme practically works well even when $\Dt=\Dx$.




\begin{theorem}[Convergence]\label{thm_convergence}
  We assume that $\left( E, N, V \right)$ is a sufficiently smooth solution of \eqref{eq_Zak2} and  $\left( \Ed{m}{}, \Nd{m}{}, \Vd{m}{} \right)$ $(m=1, \ldots, M)$ is a numerical solution of the DVDM scheme \eqref{sch_DVDM}.
  Then, for any $m = 0, \ldots, M$, and sufficiently small $\Dt, \Dx$,
  \begin{align}\label{def:Ee}
    \mathcal{E}_e^{(m)} &\coloneqq \left\| \eEd{m}{} \right\|^2 + \left\| \fd_x \eEd{m}{} \right\|^2 + \left\| \eNd{m}{} \right\|^2 + \left\| \fd_x \eVd{m}{} \right\|^2\\
    &\leq C^2 \left( (\Dt)^2+(\Dx)^2 \right)^2 \nonumber
  \end{align}
  holds for a certain constant $C > 0$ not depending on $\Dt$ and $\Dx$.
  In particular,
  \begin{align*}
    \left\| \eEd{m}{} \right\|, \left\| \fd_x \eEd{m}{} \right\|, \left\| \eNd{m}{} \right\|, \left\| \fd_x \eVd{m}{} \right\| \leq C \left( (\Dt)^2+(\Dx)^2 \right)
  \end{align*}
  holds for a certain constant $C > 0$ not depending on $\Dt$ and $\Dx$.
\end{theorem}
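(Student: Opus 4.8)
The plan is to run a discrete energy argument on the error, in the spirit of Glassey, the new ingredient being that the a priori bounds of Lemma~\ref{lem_sup} hold \emph{unconditionally}, so that no bootstrap — and hence no step-size restriction such as $\Dt=\Dx$ — is needed. First I would establish consistency: substituting the smooth exact solution $(E,N,V)$ into \eqref{sch_DVDM} and Taylor-expanding about the space–time midpoint $(t_{m+1/2},x_k)$, the midpoint averages $\fa_t$ and the central differences $\cdd_x$ produce truncation errors $\taEd{m}{k},\taNd{m}{k},\taVd{m}{k}$ which, together with their first spatial differences, are $\mathrm{O}((\Dt)^2+(\Dx)^2)$. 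Subtracting \eqref{sch_DVDM} gives the error system $\mathrm{i}\fd_t\eEd{m}{}=-\cdd_x\fa_t\eEd{m}{}+R_E^{(m)}+\taEd{m}{}$, $\fd_t\eNd{m}{}=\cdd_x\fa_t\eVd{m}{}+\taNd{m}{}$, $\fd_t\eVd{m}{}=\fa_t\eNd{m}{}+R_V^{(m)}+\taVd{m}{}$, where the nonlinear remainders $R_E^{(m)}=(\fa_t\eNd{m}{})(\fa_t\Ed{m}{})+(\fa_t\tNd{m}{})(\fa_t\eEd{m}{})$ (from $NE$) and $R_V^{(m)}=\fa_t\mathrm{Re}[(\overline{\Ed{m}{}}+\overline{\tEd{m}{}})\eEd{m}{}]$ (from $\vert E\vert^2$) are ``linear in the errors with bounded coefficients'': the factors $\|\Ed{m}{}\|_\infty$, $\|\fd_x\Ed{m}{}\|$, $\|\Nd{m}{}\|$ are bounded uniformly by Lemma~\ref{lem_sup}, and $E,N$ are smooth.

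Next I would compute $\fd_t$ of the four squared norms in $\mathcal{E}_e^{(m)}$ using Lemma~\ref{lem_ope}(7), the skew-symmetry Lemma~\ref{lem_ope}(4), and the resulting self-adjointness of $\cdd_x=\fd_x\bd_x$ (Lemma~\ref{lem_ope}(2)), and insert the error equations. The leading dispersive term of the $E$-equation and the leading wave terms of the $N$–$V$ block drop out since $\langle\cdd_x v,v\rangle\in\RR$, and — crucially — the two cross terms $\langle\cdd_x\fa_t\eVd{m}{},\fa_t\eNd{m}{}\rangle$ coming respectively from $\fd_t\|\eNd{m}{}\|^2$ and from $\fd_t\|\fd_x\eVd{m}{}\|^2$ cancel each other. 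What remains is a sum of pairings of $R_E^{(m)},R_V^{(m)},\taEd{m}{},\taNd{m}{},\taVd{m}{}$ against $\fa_t\eEd{m}{}$, $\fa_t\eNd{m}{}$, $\fd_x\fa_t\eEd{m}{}$, $\fd_x\fa_t\eVd{m}{}$. All of these except the block coming from $\fd_t\|\fd_x\eEd{m}{}\|^2$ are bounded by $C(\mathcal{E}_e^{(m)}+\mathcal{E}_e^{(m+1)})+C((\Dt)^2+(\Dx)^2)^2$ via Cauchy–Schwarz, Lemma~\ref{lem_ope}(3),(5),(6), the discrete Sobolev Lemma~\ref{lem_Sobolev} to control $\|\eEd{m}{}\|_\infty\le\hat L(\|\eEd{m}{}\|^2+\|\fd_x\eEd{m}{}\|^2)^{1/2}$, and Lemma~\ref{lem_sup}; in particular $\fd_x R_V^{(m)}$ is handled by the product rule together with $\|\fd_x\Ed{m}{}\|\le C_\infty$.

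The delicate block, which I expect to be the main obstacle, is $\fd_t\|\fd_x\eEd{m}{}\|^2$. Moving both differences onto the $E$-equation gives $\fd_t\|\fd_x\eEd{m}{}\|^2=-2\mathrm{Re}\langle\fd_t\eEd{m}{},R_E^{(m)}+\taEd{m}{}\rangle$, where $\fd_t\eEd{m}{}$ is only $\mathrm{O}(1/\Dt)$; a naive summation by parts in space of the $R_E^{(m)}$ term forces the quantity $\|\fd_x\eNd{m}{}\|$, which is \emph{not} controlled — note that Lemma~\ref{lem_sup} deliberately does not bound $\|\fd_x\Nd{m}{}\|$, mirroring the fact that the continuous $N$ only lies in $L^2$. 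The remedy is to replace $\mathcal{E}_e^{(m)}$ by an equivalent modified energy $\tilde{\mathcal{E}}_e^{(m)}$, obtained by adding cross terms modelled on the conserved energy $\mathcal{E}^{(m)}_{\mathrm{D}}$, namely a combination of $\langle\Nd{m}{},\vert\eEd{m}{}\vert^2\rangle$ and $\langle\eNd{m}{},\mathrm{Re}(\overline{\tEd{m}{}}\eEd{m}{})\rangle$, with the coefficients of $\|\eEd{m}{}\|^2$ and $\|\eNd{m}{}\|^2$ enlarged so that $\tilde{\mathcal{E}}_e^{(m)}$ remains comparable to $\mathcal{E}_e^{(m)}$ (here $\|\Ed{m}{}\|_\infty\le C_\infty$ is used once more). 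Expanding $\fd_t$ of these cross terms with the discrete Leibniz rule $\fd_t(ab)=(\fa_t a)(\fd_t b)+(\fd_t a)(\fa_t b)$ reproduces exactly $+2\mathrm{Re}\langle\fd_t\eEd{m}{},R_E^{(m)}\rangle$, which cancels the offending contribution, and every remaining piece it creates contains $\fd_t\eNd{m}{}=\cdd_x\fa_t\eVd{m}{}+\taNd{m}{}$ — i.e. $\fd_x\eVd{m}{}$, which \emph{is} in $\mathcal{E}_e^{(m)}$ — or else a smooth coefficient, and is therefore absorbed; the truncation piece $-2\mathrm{Re}\langle\fd_t\eEd{m}{},\taEd{m}{}\rangle$ is handled separately by substituting $\fd_t\eEd{m}{}=\mathrm{i}\cdd_x\fa_t\eEd{m}{}-\mathrm{i}(R_E^{(m)}+\taEd{m}{})$ and using $\|\bd_x\taEd{m}{}\|=\mathrm{O}((\Dt)^2+(\Dx)^2)$. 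This trading of $\fd_x\eNd{m}{}$ for $\fd_t\fd_x\eVd{m}{}$ through the $V$-equation, and thence into cross terms, is the heart of the argument and the reason the three-variable formulation \eqref{sch_DVDM} is used; it is also what removes the $\Dt=\Dx$ restriction.

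Collecting all the estimates yields $\fd_t\tilde{\mathcal{E}}_e^{(m)}\le C(\tilde{\mathcal{E}}_e^{(m)}+\tilde{\mathcal{E}}_e^{(m+1)})+C(\tilde{\mathcal{E}}_e^{(m)})^{3/2}+C((\Dt)^2+(\Dx)^2)^2$ for sufficiently small $\Dt,\Dx$. Writing this as $(1-C\Dt)\tilde{\mathcal{E}}_e^{(m+1)}\le(1+C\Dt)\tilde{\mathcal{E}}_e^{(m)}+\cdots$, the cubic term is absorbed by an induction on $m$ (under the inductive hypothesis $\tilde{\mathcal{E}}_e^{(m)}=\mathrm{O}(((\Dt)^2+(\Dx)^2)^2)$ it is smaller than the quadratic source), and a discrete Gronwall inequality together with $\tilde{\mathcal{E}}_e^{(0)}=\mathrm{O}(((\Dx)^2)^2)$ — which follows from $\eEd{0}{}=\eNd{0}{}=0$ and the initial consistency \eqref{eq_Vassumption} — gives $\mathcal{E}_e^{(m)}\le C\tilde{\mathcal{E}}_e^{(m)}\le C^2((\Dt)^2+(\Dx)^2)^2$ for all $m\le M$. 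Taking square roots produces the claimed bounds for $\|\eEd{m}{}\|,\|\fd_x\eEd{m}{}\|,\|\eNd{m}{}\|,\|\fd_x\eVd{m}{}\|$.
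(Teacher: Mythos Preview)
Your proposal is correct and follows essentially the same route as the paper: the paper's adjustment term $A^{(m)}=\langle \Nd{m}{},|\eEd{m}{}|^2\rangle+2\mathrm{Re}\langle \eEd{m}{},\tEd{m}{}\eNd{m}{}\rangle$ is precisely your cross-term modification, your enlarged coefficients are the paper's $\gamma_1,\gamma_2$, and your substitution trick for $-2\mathrm{Re}\langle\fd_t\eEd{m}{},\taEd{m}{}\rangle$ is exactly the step the paper singles out as removing the $\Dt=\Dx$ restriction. One simplification the paper makes that you need not forgo: since $\|\eEd{m}{}\|_\infty\le\|\Ed{m}{}\|_\infty+\|\tEd{m}{}\|_\infty$ is uniformly bounded by Lemma~\ref{lem_sup}, the cubic term $(\tilde{\mathcal{E}}_e^{(m)})^{3/2}$ never appears and the induction step is unnecessary --- Gronwall applies directly.
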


The proof of this theorem is partly similar to Glassey's argument in \cite{Glassey}, but is different in the following senses.
First, since the target scheme and its associated discrete energy function are a bit different, the evaluation needs to be fine-tuned (although details are omitted for reasons of space).
Second, even in the arguments borrowed from \cite{Glassey}, we reformulated the original arguments so that the strategy of the whole argument is more visible (see the explanation below Lemma~\ref{lem_gronwall}).
Third, we show that the assumption $\Dt=\Dx$ originally required in the previous study can be removed.
This rather benefits Glassey's scheme (the same improvement can be done for Glassey's argument), where no other step size restriction appears.


Let us first prepare some notation and essential lemmas.
We define the local truncation errors $\TaEd{m}{k}$, $\TaNd{m}{k}$ and $\TaVd{m}{k}$ by
\begin{align*}
  \mathrm{i} \fd_t \tEd{m}{k} &= - \cdd_x \fa_t \tEd{m}{k} + \left( \fa_t \tNd{m}{k} \right) \left( \fa_t \tEd{m}{k} \right)+ \TaEd{m}{k}, \\
  \fd_t \tNd{m}{k} &= \cdd_x \fa_t \tVd{m}{k} + \TaNd{m}{k},\\
  \fd_t \tVd{m}{k} &= \fa_t \tNd{m}{k} + \fa_t \left\vert \tEd{m}{k} \right\vert^2 + \TaVd{m}{k}.
\end{align*}
From this, we obtain
\begin{align*}
  \mathrm{i} \fd_t \eEd{m}{} &+ \cdd_x \fa_t \eEd{m}{} = \left( \fa_t \Nd{m}{} \right) \left(\fa_t \eEd{m}{}\right) + \left( \fa_t \eNd{m}{} \right) \left(\fa_t \tEd{m}{}\right) - \taEd{m}{}, \\
  \fd_t \eNd{m}{} &= \cdd_x \fa_t \eVd{m}{} - \taNd{m}{},\\
  \fd_t \eVd{m}{} &= \fa_t \eNd{m}{} + \fa_t \left( \left\vert \Ed{m}{} \right\vert^2 - \left\vert\tEd{m}{} \right\vert^2 \right) - \taVd{m}{},
\end{align*}
where $(v \ast w)_k \coloneqq v_kw_k$ is the Hadamard product, which is abbreviated as $v\ast w = vw$, $v\ast v=v^2$ or $z\ast\bar{z} = \vert z \vert^2$ below to save space (mind that we will frequently use this abbreviation).


The next lemma gives essential local truncation error estimate.

\begin{lemma}\label{lem_truncation}
  If the solution of \eqref{eq_Zak2} is sufficiently smooth, for $m=0,\ldots,M-1$
  \begin{align*}
    \left\| \taEd{m}{} \right\|, \left\| \fd_x \taEd{m}{} \right\|, \left\| \taNd{m}{} \right\|, \left\| \fd_x \taVd{m}{} \right\| \leq c_0 \left( (\Dt)^2 + (\Dx)^2 \right)
  \end{align*}
  holds for a certain constant $c_0 > 0$ not depending on $\Dt$ and $\Dx$.
\end{lemma}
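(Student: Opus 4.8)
The plan is to treat each local truncation error as a fixed combination of the discrete difference and average operators applied to the grid samples of the exact (smooth) solution, and to evaluate it by Taylor expansion. Writing $t_{m+1/2} \coloneqq (m+1/2)\Dt$ and $x_k \coloneqq k\Dx$, on smooth data $\fd_t$ and $\fa_t$ are, respectively, second-order accurate approximations of $\partial_t$ and of point evaluation at $t_{m+1/2}$, while $\cdd_x$ is a second-order accurate approximation of $\partial_x^2$ at $x_k$; moreover a product of two approximations of order $(\Dt)^2+(\Dx)^2$ of limits $a$ and $b$ is an approximation of the same order of $ab$. Expanding the first defining relation about $(t_{m+1/2},x_k)$ therefore gives $\taEd{m}{k} = (\mathrm{i} E_t + E_{xx} - NE)(t_{m+1/2},x_k) + \mathrm{O}((\Dt)^2+(\Dx)^2)$, and the bracketed quantity vanishes identically since $(E,N,V)$ solves \eqref{eq_Zak2}; the analogous computations for the other two relations produce $(N_t - V_{xx})(t_{m+1/2},x_k)$ and $(V_t - N - \vert E\vert^2)(t_{m+1/2},x_k)$, which vanish as well. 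By periodicity in $x$ and smoothness of $(E,N,V)$ up to time $T$, all Taylor remainders are bounded uniformly in $m$ and $k$, whence $\vert\taEd{m}{k}\vert,\,\vert\taNd{m}{k}\vert,\,\vert\taVd{m}{k}\vert \le c\,((\Dt)^2+(\Dx)^2)$; combined with $\|v\| \le \sqrt{L}\,\|v\|_\infty$ this yields the claimed bounds on $\|\taEd{m}{}\|$ and $\|\taNd{m}{}\|$.

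For the derivative quantities $\|\fd_x\taEd{m}{}\|$ and $\|\fd_x\taVd{m}{}\|$, bounding $\fd_x$ of an $\mathrm{O}((\Dt)^2+(\Dx)^2)$ term crudely would cost a factor $(\Dx)^{-1}$ and is too lossy, so I would instead exploit that $\fd_x$ commutes with every temporal operator and with $\cdd_x$ (Lemma~\ref{lem_ope}). Then $\fd_x\taEd{m}{k}$ is again the local truncation error, in the same form, of the spatially differentiated identity $\partial_x(\mathrm{i} E_t + E_{xx} - NE) = 0$; equivalently, one regards $k\mapsto\taEd{m}{k}$ as the restriction to the grid of a smooth function $T(x)$, obtained from the defining formula of $\taEd{m}{}$ by replacing the grid-sampled solution with the corresponding smooth functions of $x$, whose $x$-derivative $T'$ is a consistent $\mathrm{O}((\Dt)^2+(\Dx)^2)$ approximation of $\partial_x(\mathrm{i} E_t + E_{xx} - NE)\equiv 0$. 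Here it is essential to treat products of sampled smooth functions as samples of the smooth product rather than to invoke the discrete Leibniz rule of Lemma~\ref{lem_ope}, since the latter shifts evaluation points and hides the cancellation. The mean value theorem then gives $\vert\fd_x\taEd{m}{k}\vert \le \|T'\|_\infty \le c\,((\Dt)^2+(\Dx)^2)$ uniformly, and likewise $\vert\fd_x\taVd{m}{k}\vert \le c\,((\Dt)^2+(\Dx)^2)$ from $\partial_x(V_t - N - \vert E\vert^2) = 0$; one more use of $\|\cdot\| \le \sqrt{L}\,\|\cdot\|_\infty$ completes the estimate, with $c_0$ the largest of the finitely many constants above.

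The Taylor expansions are routine, and only a fixed finite number of $x$- and $t$-derivatives of $(E,N,V)$ is actually used, so the blanket hypothesis that the solution is ``sufficiently smooth'' is more than enough. The one genuine point --- the only place where a careless argument would fail --- is the treatment of $\fd_x\taEd{m}{}$ and $\fd_x\taVd{m}{}$: the apparent factor $(\Dx)^{-1}$ must be absorbed by recognizing $\fd_x$ of the truncation error as itself the truncation error of a once-differentiated equation, which is precisely what preserves the $\mathrm{O}((\Dt)^2+(\Dx)^2)$ order.
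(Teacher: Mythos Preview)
Your proposal is correct and follows the same approach as the paper, which simply records ``This Lemma follows from the Taylor expansion.'' You have supplied considerably more detail than the paper does, including the one nontrivial point---that $\fd_x$ of the truncation error must be viewed as the truncation error of the $x$-differentiated identity to avoid the spurious $(\Dx)^{-1}$ loss---which the paper leaves implicit.
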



\begin{proof}
  This Lemma follows from the Taylor expansion.
\end{proof}


The next lemma is of the Gronwall type, which is also essential for bounding global error estimates based on local ones.

\begin{lemma}\label{lem_gronwall}
  Suppose that $\vd{m}{}$ is a sequence with $\vd{m}{} \geq 0$ for all $m$. Suppose also that certain constants $c, d > 0$ exist such that
  \begin{align}\label{eq_gen}
    \fd_t \vd{m}{} \leq c \left( \vd{m+1}{} + \vd{m}{} \right) + d
  \end{align}
  for all $m \leq M-1$.
  Then, for sufficiently small $\Dt$ and $\Dx$,
  \begin{align*}
    \vd{m}{} \leq 2(dT + \vd{0}{}) \mathrm{exp}(4cT)
  \end{align*}
  holds for all $m = 0,\ldots,M$,.
\end{lemma}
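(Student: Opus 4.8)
\emph{Proof proposal.} The plan is to convert the differential-inequality-type hypothesis \eqref{eq_gen} into a one-step scalar recursion and then iterate. Writing out $\fd_t \vd{m}{} = (\vd{m+1}{} - \vd{m}{})/\Dt$, the assumption becomes $\vd{m+1}{} - \vd{m}{} \leq c\Dt(\vd{m+1}{} + \vd{m}{}) + d\Dt$, that is,
\begin{align*}
  (1 - c\Dt)\, \vd{m+1}{} \leq (1 + c\Dt)\, \vd{m}{} + d\Dt .
\end{align*}
The phrase ``sufficiently small $\Dt$ and $\Dx$'' will be made precise by requiring $c\Dt \leq 1/2$ (the spatial mesh size $\Dx$ plays no role here and is listed only for uniformity with the other lemmas). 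Then $1 - c\Dt \geq 1/2 > 0$, so we may divide and use the elementary bounds $\frac{1+c\Dt}{1-c\Dt} = 1 + \frac{2c\Dt}{1-c\Dt} \leq 1 + 4c\Dt$ and $\frac{1}{1-c\Dt} \leq 2$ to obtain
\begin{align*}
  \vd{m+1}{} \leq (1 + 4c\Dt)\, \vd{m}{} + 2d\Dt .
\end{align*}

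Next I would iterate this recursion starting from $m = 0$. A routine induction (in which the hypothesis $\vd{m}{} \geq 0$ is used only to keep all terms nonnegative) gives, after summing the resulting geometric series,
\begin{align*}
  \vd{m}{} \leq (1 + 4c\Dt)^m \vd{0}{} + 2d\Dt \sum_{j=0}^{m-1} (1 + 4c\Dt)^j = (1 + 4c\Dt)^m \vd{0}{} + \frac{d}{2c}\left( (1 + 4c\Dt)^m - 1 \right).
\end{align*}
Since $m \leq M$ and $M\Dt = T$, we have $(1 + 4c\Dt)^m \leq \mathrm{e}^{4c\Dt\, m} \leq \mathrm{e}^{4cT}$.

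Finally, to recover the stated form I would bound the second term with the elementary inequality $\mathrm{e}^{x} - 1 \leq x\,\mathrm{e}^{x}$ applied at $x = 4cT$, which yields $\frac{d}{2c}\left((1+4c\Dt)^m - 1\right) \leq \frac{d}{2c}\left(\mathrm{e}^{4cT} - 1\right) \leq \frac{d}{2c}\cdot 4cT\,\mathrm{e}^{4cT} = 2dT\,\mathrm{e}^{4cT}$. Combining, $\vd{m}{} \leq \mathrm{e}^{4cT}\vd{0}{} + 2dT\,\mathrm{e}^{4cT} \leq 2(dT + \vd{0}{})\,\mathrm{e}^{4cT}$, which is the assertion. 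I do not expect a genuine obstacle here; the only points requiring a little care are fixing the smallness threshold $c\Dt \leq 1/2$ so that the division by $1 - c\Dt$ is legitimate, and steering the geometric sum together with the inequality $\mathrm{e}^x - 1 \leq x\mathrm{e}^x$ so that the final constants come out exactly as the factor $2$ and the term $dT + \vd{0}{}$ with exponential $\mathrm{e}^{4cT}$ stated in the lemma.
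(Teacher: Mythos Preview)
Your argument is correct and is precisely the discrete Gronwall computation the paper invokes: the paper's own proof merely states ``This lemma follows from the discrete Gronwall Lemma~\cite{gronwall}'' without spelling out the steps, whereas you carry out the standard rearrangement--iteration--exponential bound explicitly. The smallness condition $c\Dt \leq 1/2$ you isolate is exactly what is needed, and your handling of the constants to land on the stated bound $2(dT + \vd{0}{})\mathrm{e}^{4cT}$ is clean.
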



\begin{proof}
  This lemma follows from the discrete Gronwall Lemma \cite{gronwall}.
\end{proof}


Now let us briefly state our strategy for proving Theorem~\ref{thm_convergence}, which inevitably gets complicated.
In continuous partial differential equation theories, often invariants are useful. 
Similarly, in structure-preserving schemes, discrete invariants often give a starting point of convergence estimates. In the present case, this is the energy of the DVDM scheme \eqref{sch_DVDM}:
\begin{align*}
  \mathcal{E}^{(m)}_{\mathrm{D}} = \left\| \fd_x \Ed{m}{} \right\|^2 + \frac{1}{2} \left( \left \|\Nd{m}{} \right\|^2 + \left\| \fd_x \Vd{m}{} \right\|^2 \right) + \left\langle \Nd{m}{}, \left\vert \Ed{m}{} \right\vert^2 \right\rangle,
\end{align*}
from which we naturally expect to evaluate
\begin{align}
  \left\| \fd_x \eEd{m}{} \right\|^2 + \frac{1}{2} \left( \left\| \eNd{m}{} \right\|^2 + \left\|\fd_x \eVd{m}{} \right\|^2 \right) + \left\langle \eNd{m}{}, \left\vert \eEd{m}{} \right\vert^2 \right \rangle.
\end{align}
The last inner product can be negative, but it can be bounded from above as
\begin{align*}
  \left\vert \left\langle \eNd{m}{}, \left\vert \eEd{m}{} \right\vert^2 \right \rangle \right\vert \leq \left\| \eEd{m}{} \right\|_\infty \left\langle \left\vert \eNd{m}{} \right\vert, \left\vert \eEd{m}{} \right\vert \right \rangle \leq \dfrac{C_\infty}{2} \left( \left\| \eNd{m}{} \right\|^2 + \left\| \eEd{m}{} \right\|^2 \right),
\end{align*}
which suggests that we instead consider
\begin{align}\label{eq_errorenergy}
  \left\| \fd_x \eEd{m}{} \right\|^2
  + \frac{1}{2} \left( \left\| \eNd{m}{} \right\|^2 + \left\|\fd_x \eVd{m}{} \right\|^2 \right)
  + \dfrac{C_\infty}{2} \left( \left\| \eNd{m}{} \right\|^2 + \left\| \eEd{m}{} \right\|^2 \right),
\end{align}
where $C_\infty$ is the constant defined in Lemma~\ref{lem_sup}.
This, along with Lemma~\ref{lem_gronwall} we eventually use, suggests that we define a local error estimate by, for example,
\begin{align*}
  \mathcal{E}_e^{(m)} \coloneqq \left\| \eEd{m}{} \right\|^2 + \left\| \fd_x \eEd{m}{} \right\|^2 + \left( \left\| \eNd{m}{} \right\|^2 + \left\| \fd_x \eVd{m}{} \right\|^2 \right).
\end{align*}
It was actually employed in~\eqref{def:Ee}.
Unfortunately, however, this does not immediately give rise to an estimate of the form
\begin{align}\label{eq_eqfde}
  \fd_t \mathcal{E}_e^{(m)} \leq C \left( \mathcal{E}_e^{(m+1)} + \mathcal{E}_e^{(m)} \right) + C_0^2 \left( (\Dt)^2+(\Dx)^2 \right)^2
\end{align}
for some constants $C_0, C > 0$ (not depending on $\Dt$ and $\Dx$).
Thus we need to take several additional steps.

The first additional step to take is to relax \eqref{eq_eqfde} to
\begin{align}
  \fd_t \left( \mathcal{E}_e^{(m)} + A^{(m)} \right) \leq C \left( \mathcal{E}_e^{(m+1)} + \mathcal{E}_e^{(m)} \right) + C_0^2 \left( (\Dt)^2+(\Dx)^2 \right)^2,
\end{align}
where $A^{(m)} \approx \left\langle \Nd{m}{}, \vert\Ed{m}{}\vert^2\right\rangle$ is an adjustment term required to establish such an inequality, which will be identified in Lemma~\ref{lem:Ee:2}.
This inequality is, unfortunately, still not what we seek as is, since $A^{(m)}$ is not necessarily non-negative, and we are still not be able to apply Lemma~\ref{lem_gronwall}.

Thus we take a second additional step, where we consider an alternative to $\mathcal{E}_e^{(m)}$, given by
\begin{align*}
  \hat{\mathcal{E}}_e^{(m)} &\coloneqq \gamma_1 \left\| \eEd{m}{} \right\|^2 + \left\| \fd_x \eEd{m}{} \right\|^2 + \gamma_2 \left( \left\| \eNd{m}{} \right\|^2 + \left\| \fd_x \eVd{m}{} \right\|^2 \right) + A^{(m)}.
\end{align*}
The only difference is that parameters $\gamma_1, \gamma_2 > 0$ are newly included, which is useful to finalize our argument.
Below we will give in Lemma~\ref{lem:tH:bound} a bound on $\vert A^{(m)}\vert$, and show that if we take sufficiently large $\gamma_1, \gamma_2 >0$, $\mathcal{E}_e^{(m)}$ becomes non-negative (Lemma~\ref{lem:hEe:bound}).
This enables us to (finally) apply the Gronwall-type Lemma~\ref{lem_gronwall} to obtain a global estimate for $\hat{\mathcal{E}}_e^{(m)}$.
Furthermore, in Lemma~\ref{lem:hEe:bound}, we will also show that under the same assumptions $\hat{\mathcal{E}}_e^{(m)} \geq \mathcal{E}_e^{(m)}$ holds.
This completes the proof of Theorem~\ref{thm_convergence}.

\vspace{4mm}

Let us carry out the strategy above.
Note that Lemma~\ref{lem_sup} gives useful bounds for some quantities.


Now we evaluate each of the three terms of $\fd_t \mathcal{E}_e^{(m)}$ defined in~\eqref{def:Ee} in the following three lemmas.
The first, third and fourth terms can be evaluated along the line of the discussion in Glassey~\cite{Glassey}, so the proof of this evaluation (Lemma~\ref{lem:Ee:1,3}) is omitted.

\begin{lemma}\label{lem:Ee:1,3}
  If $\Dt$ and $\Dx$ are sufficiently small, then, for $m = 0,\ldots,M-1$,
  \begin{align*}
    \fd_t \left\| \eEd{m}{} \right\|^2 &\leq C_1 \left( \mathcal{E}_e^{(m+1)} + \mathcal{E}_e^{(m)} \right) + (c_0)^2 \left( (\Dx)^2+(\Dt)^2 \right)^2,\\
    \fd_t \left( \left\| \eNd{m}{} \right\|^2 + \left\| \fd_x \eVd{m}{} \right\|^2 \right) &\leq C_2 \left( \mathcal{E}_e^{(m+1)} + \mathcal{E}_e^{(m)} \right) + 2(c_0)^2 \left( (\Dx)^2+(\Dt)^2 \right)^2
  \end{align*}
  holds for $c_0$ defined by Lemma \ref{lem_truncation} and certain constants $C_1, C_2 > 0$ not depending on $\Dt$ and $\Dx$.
\end{lemma}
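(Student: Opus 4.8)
The plan is to differentiate each of the three quantities in time using the identity $\fd_t\|z^{(m)}\|^2 = 2\mathrm{Re}\langle\fd_t z^{(m)},\fa_t z^{(m)}\rangle$ of Lemma~\ref{lem_ope}(7), substitute the error equations recorded just before Lemma~\ref{lem_truncation}, and then exploit skew-symmetry (Lemma~\ref{lem_ope}(4)) together with the reality of $N$ to cancel the principal ``energy-type'' contributions, so that only terms that are either quadratically small (truncation errors, via Lemma~\ref{lem_truncation}) or controlled by $\mathcal{E}_e^{(m)}$ remain.

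For $\fd_t\|\eEd{m}{}\|^2$: writing it as $2\mathrm{Re}\langle\fd_t\eEd{m}{},\fa_t\eEd{m}{}\rangle$ and pairing the first error equation $\mathrm{i}\fd_t\eEd{m}{}+\cdd_x\fa_t\eEd{m}{} = (\fa_t\Nd{m}{})(\fa_t\eEd{m}{})+(\fa_t\eNd{m}{})(\fa_t\tEd{m}{})-\taEd{m}{}$ with $\fa_t\eEd{m}{}$, one has by $\cdd_x=\fd_x\bd_x$ that $\langle\cdd_x\fa_t\eEd{m}{},\fa_t\eEd{m}{}\rangle=-\|\bd_x\fa_t\eEd{m}{}\|^2\in\RR$, while $\langle(\fa_t\Nd{m}{})(\fa_t\eEd{m}{}),\fa_t\eEd{m}{}\rangle\in\RR$ since $\Nd{m}{}$ is real; because $\mathrm{Re}(-\mathrm{i}z)=\mathrm{Im}(z)$, both of these drop out and only $-2\mathrm{Im}\langle(\fa_t\eNd{m}{})(\fa_t\tEd{m}{}),\fa_t\eEd{m}{}\rangle+2\mathrm{Im}\langle\taEd{m}{},\fa_t\eEd{m}{}\rangle$ survives. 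The first term is bounded by $\|\fa_t\tEd{m}{}\|_\infty\|\fa_t\eNd{m}{}\|\|\fa_t\eEd{m}{}\|$, where $\|\fa_t\tEd{m}{}\|_\infty$ is controlled by the smoothness of $E$; the second by Lemma~\ref{lem_truncation} and Lemma~\ref{lem_ope}(5). Using Lemma~\ref{lem_ope}(5),(6) together with $\|\fa_t z^{(m)}\|^2\le\tfrac12(\|z^{(m+1)}\|^2+\|z^{(m)}\|^2)$ then yields the first asserted inequality.

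For $\fd_t(\|\eNd{m}{}\|^2+\|\fd_x\eVd{m}{}\|^2)$ the two summands must be handled together. Here $\fd_t\|\eNd{m}{}\|^2=2\langle\fd_t\eNd{m}{},\fa_t\eNd{m}{}\rangle$, and commuting $\fd_t$ with $\fd_x$ (Lemma~\ref{lem_ope}(1)) and using skew-symmetry with $\cdd_x=\fd_x\bd_x$ gives $\fd_t\|\fd_x\eVd{m}{}\|^2=-2\langle\fd_t\eVd{m}{},\cdd_x\fa_t\eVd{m}{}\rangle$. Substituting the second and third error equations, the two occurrences $\pm2\langle\cdd_x\fa_t\eVd{m}{},\fa_t\eNd{m}{}\rangle$ cancel (both real and symmetric), leaving $-2\langle\taNd{m}{},\fa_t\eNd{m}{}\rangle-2\langle\fa_t(|\Ed{m}{}|^2-|\tEd{m}{}|^2),\cdd_x\fa_t\eVd{m}{}\rangle+2\langle\taVd{m}{},\cdd_x\fa_t\eVd{m}{}\rangle$. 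Since the second-order difference $\cdd_x\fa_t\eVd{m}{}$ is not controlled by $\mathcal{E}_e^{(m)}$, I would shift one forward difference onto the other factor, $\langle\taVd{m}{},\cdd_x\fa_t\eVd{m}{}\rangle=-\langle\bd_x\taVd{m}{},\bd_x\fa_t\eVd{m}{}\rangle$ and likewise for the nonlinear term, and then use $\|\bd_x w\|=\|\fd_x w\|$ (periodicity) so that only $\|\fd_x\eVd{m}{}\|$ and, via Lemma~\ref{lem_truncation}, $\|\fd_x\taVd{m}{}\|$ enter.

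The one genuinely non-mechanical step, and the place I expect to be the main obstacle, is controlling $\bd_x(|\Ed{m}{}|^2-|\tEd{m}{}|^2)$ after this reduction is applied to the nonlinear term. Writing $|\Ed{m}{}|^2-|\tEd{m}{}|^2 = \mathrm{Re}[(\bar{\Ed{m}{}}+\bar{\tEd{m}{}})\eEd{m}{}]$ and applying the discrete Leibniz rule of Lemma~\ref{lem_ope}(3) produces (up to index shifts) terms of the shape $(\fd_x\eEd{m}{})(\bar{\Ed{m}{}}+\bar{\tEd{m}{}})$ and $\eEd{m}{}\,\fd_x(\bar{\Ed{m}{}}+\bar{\tEd{m}{}})$, whose $L^2_K$ norms are estimated using the uniform bounds $\|\Ed{m}{}\|_\infty,\|\fd_x\Ed{m}{}\|\le C_\infty$ from Lemma~\ref{lem_sup} together with the corresponding smoothness bounds on $\tEd{m}{}$ and $\fd_x\tEd{m}{}$; this gives $\|\bd_x(|\Ed{m}{}|^2-|\tEd{m}{}|^2)\|\le C(\|\eEd{m}{}\|+\|\fd_x\eEd{m}{}\|)$. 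A final application of Lemma~\ref{lem_ope}(5),(6) then absorbs everything into $C_2(\mathcal{E}_e^{(m+1)}+\mathcal{E}_e^{(m)})+2(c_0)^2((\Dx)^2+(\Dt)^2)^2$, the extra factor $2$ reflecting the two truncation terms $\taNd{m}{}$ and $\taVd{m}{}$ now present.
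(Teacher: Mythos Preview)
Your proposal is correct and follows precisely the standard energy-method argument that the paper has in mind: the paper omits the proof entirely, stating only that ``the first, third and fourth terms can be evaluated along the line of the discussion in Glassey~\cite{Glassey}'', and your outline is exactly that argument (cancellation of the real/self-adjoint principal terms in the $E$-equation, and of the cross term $\langle\cdd_x\fa_t\eVd{m}{},\fa_t\eNd{m}{}\rangle$ in the $N$--$V$ pair, followed by shifting one difference off $\cdd_x\fa_t\eVd{m}{}$ so that only $\fd_x\eVd{m}{}$ appears). One small point to make explicit: in the estimate of $\eEd{m}{}\,\fd_x(\bar{\Ed{m}{}}+\bar{\tEd{m}{}})$ you should either split $\fd_x(\bar{\Ed{m}{}}+\bar{\tEd{m}{}}) = \fd_x\bar e_E^{(m)} + 2\fd_x\bar{\tEd{m}{}}$ (so that $\|\eEd{m}{}\|_\infty\le C_\infty + \|\tEd{m}{}\|_\infty$ handles the first piece and $\|\fd_x\tEd{m}{}\|_\infty$ the second) or invoke Lemma~\ref{lem_Sobolev} for $\|\eEd{m}{}\|_\infty$; the bounds $\|\Ed{m}{}\|_\infty,\|\fd_x\Ed{m}{}\|\le C_\infty$ alone do not directly give $\|\eEd{m}{}\,\fd_x(\bar{\Ed{m}{}}+\bar{\tEd{m}{}})\|\le C(\|\eEd{m}{}\|+\|\fd_x\eEd{m}{}\|)$.
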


The evaluation of the second term of $\fd_t \mathcal{E}_e^{(m)}$ becomes a bit cumbersome, where we need to add an adjustment term ($A^{(m)}$ below).
Note also that the proof of the following Lemma is improved from Glassey's original argument in~\cite{Glassey}.


\begin{lemma}\label{lem:Ee:2}
  Suppose $\Dt, \Dx$ are sufficiently small.
  Then, for $m = 0,\ldots,M-1$,
  \begin{align*}
    \fd_t \left\| \fd_x \eEd{m}{} \right\|^2 \leq - \fd_t A^{(m)} + C_3 \left( \mathcal{E}_e^{(m+1)} + \mathcal{E}_e^{(m)} \right) + 4(c_0)^2 \left( (\Dx)^2+(\Dt)^2 \right)^2,
  \end{align*}
  where
  \begin{align*}
    A^{(m)} \coloneqq \left\langle \eNd{m}{} + \tNd{m}{}, \left\vert \eEd{m}{} \right\vert^2 \right\rangle + 2\mathrm{Re} \left\langle \eEd{m}{}, \tEd{m}{} \eNd{m}{} \right\rangle,
  \end{align*}
  holds for  $c_0$ defined by Lemma~\ref{lem_truncation} and a certain constant $C_3 > 0$ not depending on $\Dt$ and $\Dx$.
\end{lemma}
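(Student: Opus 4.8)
I would begin from the discrete-energy identity for the $\fd_x\eEd{m}{}$-term: by Lemma~\ref{lem_ope}(7), the mutual commutativity of the difference operators, and the skew-symmetry $\langle\fd_x v,\fd_x w\rangle=-\langle v,\cdd_x w\rangle$ (Lemma~\ref{lem_ope}(2),(4)),
\[
  \fd_t\left\|\fd_x\eEd{m}{}\right\|^2=-2\,\mathrm{Re}\left\langle\fd_t\eEd{m}{},\,\cdd_x\fa_t\eEd{m}{}\right\rangle.
\]
Substituting the first error equation for $\cdd_x\fa_t\eEd{m}{}$ and using $-2\,\mathrm{Re}\langle\fd_t\eEd{m}{},-\mathrm{i}\fd_t\eEd{m}{}\rangle=0$, the right-hand side equals $T_a+T_b+T_c$ with $T_a\coloneqq-2\,\mathrm{Re}\langle\fd_t\eEd{m}{},(\fa_t\Nd{m}{})(\fa_t\eEd{m}{})\rangle$, $T_b\coloneqq-2\,\mathrm{Re}\langle\fd_t\eEd{m}{},(\fa_t\eNd{m}{})(\fa_t\tEd{m}{})\rangle$, and $T_c\coloneqq2\,\mathrm{Re}\langle\fd_t\eEd{m}{},\taEd{m}{}\rangle$.

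The core of the proof is to show $T_a+T_b=-\fd_t A^{(m)}+R$, where $R$ is of the claimed order. The two elementary identities used here are the temporal product rule $\fd_t\langle a^{(m)},b^{(m)}\rangle=\langle\fa_t a^{(m)},\fd_t b^{(m)}\rangle+\langle\fd_t a^{(m)},\fa_t b^{(m)}\rangle$ (with the pointwise analogue $\fd_t(fg)=(\fd_t f)(\fa_t g)+(\fa_t f)(\fd_t g)$) and the ``defect'' identity $\fa_t(fg)=(\fa_t f)(\fa_t g)+\tfrac{(\Dt)^2}{4}(\fd_t f)(\fd_t g)$. Since $\Nd{m}{}$ is real, $T_a=-\langle\fa_t\Nd{m}{},\fd_t|\eEd{m}{}|^2\rangle=-\fd_t\langle\Nd{m}{},|\eEd{m}{}|^2\rangle+\langle\fd_t\Nd{m}{},\fa_t|\eEd{m}{}|^2\rangle$; and applying the product rule to $\fd_t\langle\eEd{m}{},\tEd{m}{}\eNd{m}{}\rangle$ and then peeling off the defect yields
\begin{align*}
  T_b={}&-\fd_t\!\left(2\,\mathrm{Re}\langle\eEd{m}{},\tEd{m}{}\eNd{m}{}\rangle\right)\\
  &{}+\tfrac{(\Dt)^2}{2}\,\mathrm{Re}\langle\fd_t\eEd{m}{},(\fd_t\tEd{m}{})(\fd_t\eNd{m}{})\rangle+2\,\mathrm{Re}\langle\fa_t\eEd{m}{},\fd_t(\tEd{m}{}\eNd{m}{})\rangle.
\end{align*}
Because $\eNd{m}{}+\tNd{m}{}=\Nd{m}{}$, adding $T_a$ and $T_b$ reproduces $-\fd_t A^{(m)}$ for precisely the $A^{(m)}$ in the statement, and $R$ collects $\langle\fd_t\Nd{m}{},\fa_t|\eEd{m}{}|^2\rangle$, the last two terms of $T_b$, and $T_c$.

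It then remains to bound $R\le C_3(\mathcal{E}_e^{(m+1)}+\mathcal{E}_e^{(m)})+4(c_0)^2((\Dx)^2+(\Dt)^2)^2$. The ingredients are: Lemma~\ref{lem_sup}, which together with the smoothness of $(E,N,V)$ supplies uniform bounds on $\|\eEd{m}{}\|_\infty$ and on all exact-solution quantities occurring ($\|\tEd{m}{}\|_\infty$, $\|\fd_x\tEd{m}{}\|_\infty$, $\|\fd_t\tEd{m}{}\|_\infty$, $\|\fd_t\tNd{m}{}\|$, $\|\fa_t\Nd{m}{}\|$), and, if convenient, a crude uniform bound on $\mathcal{E}_e^{(m)}$ itself; Lemma~\ref{lem_Sobolev}, giving $\|\eEd{m}{}\|_\infty\le\hat L(\mathcal{E}_e^{(m)})^{1/2}$; the discrete Leibniz rule Lemma~\ref{lem_ope}(3), giving $\|\fd_x|\eEd{m}{}|^2\|\le2\|\eEd{m}{}\|_\infty\|\fd_x\eEd{m}{}\|$ and similar estimates; skew-symmetry, invoked so that every occurrence of $\cdd_x$ acting on $\eVd{m}{}$ (entering via $\fd_t\eNd{m}{}=\cdd_x\fa_t\eVd{m}{}-\taNd{m}{}$) is integrated by parts into a pairing of $\fd_x\fa_t\eVd{m}{}$ against some $\fd_x(\cdot)$, so that only $\|\fd_x\eVd{m}{}\|$ — never $\|\cdd_x\eVd{m}{}\|$ — appears; Lemma~\ref{lem_truncation}; and Young's inequality (which also produces the constant $4(c_0)^2$). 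Two points need care. First, $T_c$ still contains $\fd_t\eEd{m}{}$, hence a factor $1/\Dt$; I would remove it by substituting the first error equation once more, trading $\fd_t\eEd{m}{}$ for $\mathrm{i}\cdd_x\fa_t\eEd{m}{}$ (paired with $\taEd{m}{}$ and integrated by parts into $-\mathrm{i}\langle\fd_x\fa_t\eEd{m}{},\fd_x\taEd{m}{}\rangle$) plus zeroth-order terms, the $\langle\taEd{m}{},\taEd{m}{}\rangle$ piece being purely imaginary and so dropping out of $2\,\mathrm{Re}$. Second, the remaining $\fd_t$-terms in $R$ are accompanied by $(\Dt)^2$, which I keep attached to the difference quotients so that, e.g., $(\Dt)^2(\fd_t\eEd{m}{})(\fd_t\eNd{m}{})=(\eEd{m+1}{}-\eEd{m}{})(\eNd{m+1}{}-\eNd{m}{})$, controlled by $\mathcal{E}_e^{(m+1)}+\mathcal{E}_e^{(m)}$ without any restriction relating $\Dt$ and $\Dx$.

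The main obstacle is the bookkeeping of the second step: determining exactly which superposition of temporal differences yields $\fd_t A^{(m)}$ and carrying the defect $\fa_t(fg)-(\fa_t f)(\fa_t g)$ through correctly — this defect is precisely what dictates the non-obvious shape of $A^{(m)}$, and in particular why $A^{(m)}$ is not simply $\langle\Nd{m}{},|\eEd{m}{}|^2\rangle$. The secondary difficulty, and the place where the argument improves on Glassey's, is ensuring that every stray $\cdd_x$ is integrated by parts against a $\fd_x$ or a truncation error and every $1/\Dt$ is either cancelled by an accompanying $(\Dt)^2$ or removed via the error equation; otherwise a spurious condition such as $\Dt=\mathrm{O}((\Dx)^2)$ would be introduced.
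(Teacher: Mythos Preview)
Your proposal is correct and follows essentially the same route as the paper: the same three-term decomposition $T_a+T_b+T_c$ (the paper's $I_1$, $I_2$, and the truncation piece), the same adjustment terms $A_1^{(m)}=\langle N^{(m)},|e_E^{(m)}|^2\rangle$ and $A_2^{(m)}=2\,\mathrm{Re}\langle e_E^{(m)},\tilde E^{(m)}e_N^{(m)}\rangle$ extracted via the discrete product rule, and the same device for $T_c$ of re-substituting the error equation so that the factor $1/\Delta t$ disappears without imposing $\Delta t=\Delta x$. The paper defers the estimates for $I_1$ and $I_2$ to Glassey's original argument, whereas you spell out the product-rule/defect mechanism explicitly, but the underlying computation is identical.
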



\begin{proof}
  By definition, we see that
  \begin{align}\label{eq_eval_ex}
    \fd_t \left\| \fd_x \eEd{m}{} \right\|^2 &= 2 \mathrm{Re} \left\langle \fd_t \fd_x \eEd{m}{}, \fa_t \fd_x \eEd{m}{} \right\rangle \nonumber\\
    &= -2 \mathrm{Re} \left[ \mathrm{i} \left\| \fd_t \eEd{m}{} \right\|^2 \right]-2 \mathrm{Re} \left\langle \fd_t \eEd{m}{}, \fa_t \cdd_x \eEd{m}{} \right\rangle \nonumber\\
    &= -2 \mathrm{Re} \left\langle \fd_t \eEd{m}{}, \mathrm{i} \fd_t \eEd{m}{} + \fa_t \cdd_x \eEd{m}{} \right\rangle \nonumber\\
    \begin{split}
      &= -2\mathrm{Re} \left\langle \fd_t \eEd{m}{}, \left( \fa_t \Nd{m}{} \right) \left( \fa_t \eEd{m}{} \right) \right\rangle\\
      &\hspace{0mm} - 2\mathrm{Re} \left\langle \fd_t \eEd{m}{}, \left( \fa_t \tEd{m}{} \right) \left( \fa_t \eNd{m}{} \right) \right\rangle + 2 \mathrm{Re} \left\langle \fd_t \eEd{m}{}, \taEd{m}{} \right\rangle.
    \end{split}
  \end{align}

  Here, the last term of \eqref{eq_eval_ex} is evaluated as
  \begin{align*}
    &2\mathrm{Re} \left\langle \fd_t \eEd{m}{}, \taEd{m}{} \right\rangle\\
    &= 2\mathrm{Im}\left[ \left\langle - \fa_t \cdd_x \eEd{m}{} + \left( \fa_t \Nd{m}{} \right) \left( \fa_t \eEd{m}{} \right) + \left( \fa_t \eNd{m}{} \right) \left( \fa_t \tEd{m}{} \right), \taEd{m}{} \right\rangle \right]\\
    &\leq \left\| \fa_t \fd_x \eEd{m}{} \right\|^2 + \left\| \fd_x \taEd{m}{} \right\|^2 + \left\| \left( \fa_t \Nd{m}{} \right) \left( \fa_t \eEd{m}{} \right) \right\|^2 + \left\| \taEd{m}{} \right\|^2\\
    &\hspace{50mm} + \left\| \left( \fa_t \tEd{m}{} \right) \left( \fa_t \eNd{m}{} \right) \right\|^2 + \left\| \taEd{m}{} \right\|^2 \\
    &\leq \left( 1 + (C_\infty)^2 + \left\| \fa_t \tNd{m}{} \right\|_\infty^2 \right) \fa_t \mathcal{E}_e^{(m)} + 3(c_0)^2 \left( (\Dt)^2 + (\Dx)^2 \right)^2.
  \end{align*}

  Therefore, it suffices to evaluate the first and second terms of \eqref{eq_eval_ex}:
  \begin{align*}
    I_1 &\coloneqq -2 \mathrm{Re} \left\langle \fd_t \eEd{m}{}, \left(\fa_t \Nd{m}{} \right) \left( \fa_t \eEd{m}{} \right) \right\rangle, \\
    I_2 &\coloneqq - 2\mathrm{Re} \left\langle \fd_t \eEd{m}{}, \left( \fa_t \tEd{m}{} \right) \left( \fa_t \eNd{m}{} \right) \right\rangle.
  \end{align*}

  Since the definition of $\eEd{m}{}$ is the same as in Glassey's study~\cite{Glassey}, we can treat $I_1$ and $I_2$ in exactly the same way. In other words, we have

  \begin{align*}
    I_1 \leq -\fd_t A_1^{(m)} + 
    C_{3,1} 
    \left( \mathcal{E}_e^{(m+1)} +\mathcal{E}_e^{(m)} \right)
  \end{align*}
  for some $C_{3,1}>0$ and $A_1^{(m)} \coloneqq \left\langle \Nd{m}{}, \left\vert \eEd{m}{} \right\vert^2 \right\rangle = \left\langle \eNd{m}{} + \tNd{m}{}, \left\vert \eEd{m}{} \right\vert^2 \right\rangle$, and 

  \begin{align*}
    I_2 \leq - \fd_t A_2^{(m)} + C_{3,2} 
    \left( \mathcal{E}_e^{(m+1)} +\mathcal{E}_e^{(m)} \right) + (c_0)^2 \left( (\Dt)^2 + (\Dx)^2 \right)^2
  \end{align*}
  for some $C_{3,2} > 0$ and $A_2^{(m)} \coloneqq 2\mathrm{Re} \left\langle \eEd{m}{}, \tEd{m}{} \eNd{m}{} \right\rangle$. Summing up the above estimates, we finally obtain
  \begin{align*}
    \fd_t \left\| \fd_x \eEd{m}{} \right\|^2
    &= I_1 + I_2 + 2 \mathrm{Re} \left\langle \fd_t \eEd{m}{}, \taEd{m}{} \right\rangle\\
    &\leq -\fd_t A^{(m)} + C_3 \left( \mathcal{E}_e^{(m+1)} +\mathcal{E}_e^{(m)} \right) + 4(c_0)^2 \left( (\Dt)^2 + (\Dx)^2 \right)^2
  \end{align*}
  for some $C_3>0$.
  Note that $A^{(m)} = A_1^{(m)} + A_2^{(m)}$.
\end{proof}

\begin{remark}
  The last term of \eqref{eq_eval_ex} is evaluated by Glassey in \cite{Glassey} as
  \begin{align*}
    \mathrm{Re} \left\langle \fd_t \eEd{m}{}, \taEd{m}{} \right\rangle &\leq \frac{1}{\Dt} \left\|\eEd{m+1}{} - \eEd{m}{} \right\| \left\| \taEd{m}{} \right\|\\
    &\leq \left( \left\|\eEd{m+1}{} \right\| + \left\| \eEd{m}{} \right\| \right) \mathrm{O}\left(\frac{(\Dx)^2 + (\Dt)^2}{\Dt}\right)\\
    &= \mathrm{O} \left( \Dt \cdot \fa_t \left( \mathcal{E}_e^{(m)} \right)^{\frac{1}{2}} \right) = \mathrm{O}\left( \mathcal{E}_e^{(m+1)} + \mathcal{E}_e^{(m)} \right)
  \end{align*}
  under the assumption $\Dt = \Dx$.
  Observe that this assumption is no longer required in the above proof.
\end{remark}


At this moment, we have
\begin{align*}
  \fd_t \left( \mathcal{E}_e^{(m)} + A^{(m)} \right) \leq C' \left( \mathcal{E}_e^{(m+1)} + \mathcal{E}_e^{(m)} \right) + 7(c_0)^2 \left( (\Dt)^2 + (\Dx)^2 \right)^2
\end{align*}
for $C' \coloneqq C_1 + C_2 + C_3$. This still does not fit Lemma \ref{lem_gronwall} due to the additional term $A^{(m)}$.
The next lemma gives a bound for this term to cancel it out.


\begin{lemma}\label{lem:tH:bound}
  For $m = 0,\ldots,M$, $\left\vert A^{(m)} \right\vert \leq \dfrac{3}{2} \left\| \eNd{m}{} \right\|^2 + C_4 \left\| \eEd{m}{} \right\|^2$ holds for a certain constant $C_4 > 0$ not depending on $\Dt$ and $\Dx$.
\end{lemma}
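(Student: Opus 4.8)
The plan is to use the decomposition $A^{(m)} = A_1^{(m)} + A_2^{(m)}$ already made in the proof of Lemma~\ref{lem:Ee:2}, with $A_1^{(m)} = \left\langle \eNd{m}{} + \tNd{m}{}, \left\vert \eEd{m}{} \right\vert^2 \right\rangle$ and $A_2^{(m)} = 2\mathrm{Re}\left\langle \eEd{m}{}, \tEd{m}{} \eNd{m}{} \right\rangle$, estimate each piece by the triangle inequality combined with weighted Young inequalities of the form $2ab \le t a^2 + t^{-1} b^2$, and then collect the constants. The one thing to watch is the coefficient of $\left\| \eNd{m}{} \right\|^2$: it must end up being exactly $\tfrac32$, so the Young weights cannot all be taken equal to $1$. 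Throughout, I would use that the exact solution is bounded on $[0,T]\times[0,L]$ and that Lemma~\ref{lem_sup} gives $\left\| \Ed{m}{} \right\|_\infty \le C_\infty$, hence $\left\| \eEd{m}{} \right\|_\infty \le C_\infty + \sup_{[0,T]\times[0,L]}\left\vert E \right\vert \eqqcolon C_E$ uniformly in $\Dt,\Dx,m$.

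For $A_1^{(m)}$ I would split $\left\vert A_1^{(m)} \right\vert \le \left\vert \left\langle \eNd{m}{}, \left\vert \eEd{m}{} \right\vert^2 \right\rangle \right\vert + \left\vert \left\langle \tNd{m}{}, \left\vert \eEd{m}{} \right\vert^2 \right\rangle \right\vert$. The second term is bounded by $\left\| \tNd{m}{} \right\|_\infty \left\| \eEd{m}{} \right\|^2 \le \bigl( \sup_{[0,T]\times[0,L]} \left\vert N \right\vert \bigr) \left\| \eEd{m}{} \right\|^2$. For the first term, apply $ab \le \tfrac12(a^2 + b^2)$ pointwise with $a = \left\vert \eNd{m}{k} \right\vert$, $b = \left\vert \eEd{m}{k} \right\vert^2$ and sum to get $\tfrac12\left\| \eNd{m}{} \right\|^2 + \tfrac12 \left\| \eEd{m}{} \right\|_4^4$, then use $\left\| \eEd{m}{} \right\|_4^4 \le \left\| \eEd{m}{} \right\|_\infty^2 \left\| \eEd{m}{} \right\|^2 \le C_E^2 \left\| \eEd{m}{} \right\|^2$. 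Thus $A_1^{(m)}$ contributes $\tfrac12\left\| \eNd{m}{} \right\|^2$ plus a constant times $\left\| \eEd{m}{} \right\|^2$.

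For $A_2^{(m)}$ I would bound $\left\vert A_2^{(m)} \right\vert \le 2\sum_k \left\vert \eEd{m}{k} \right\vert \left\vert \tEd{m}{k} \right\vert \left\vert \eNd{m}{k} \right\vert \Dx \le 2 \left\| \tEd{m}{} \right\|_\infty \sum_k \left\vert \eEd{m}{k} \right\vert \left\vert \eNd{m}{k} \right\vert \Dx$ and then apply $2\left\vert \eEd{m}{k} \right\vert \left\vert \eNd{m}{k} \right\vert \le t^{-1}\left\vert \eEd{m}{k} \right\vert^2 + t \left\vert \eNd{m}{k} \right\vert^2$ with the choice $t = 1/\left\| \tEd{m}{} \right\|_\infty$ (the case $\left\| \tEd{m}{} \right\|_\infty = 0$ being trivial, since then $A_2^{(m)} = 0$). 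This gives $\left\vert A_2^{(m)} \right\vert \le \left\| \eNd{m}{} \right\|^2 + \left\| \tEd{m}{} \right\|_\infty^2 \left\| \eEd{m}{} \right\|^2$, i.e.\ a contribution of exactly $1$ to the $\left\| \eNd{m}{} \right\|^2$-budget. Adding the two estimates yields $\left\vert A^{(m)} \right\vert \le \tfrac32 \left\| \eNd{m}{} \right\|^2 + C_4 \left\| \eEd{m}{} \right\|^2$ with, for instance, $C_4 \coloneqq \tfrac12 C_E^2 + \sup_{[0,T]\times[0,L]}\left\vert N \right\vert + \bigl( \sup_{[0,T]\times[0,L]}\left\vert E \right\vert \bigr)^2$, which depends only on the exact solution and on $C_\infty$, hence not on $\Dt$, $\Dx$ or $m$.

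There is no real difficulty here beyond careful bookkeeping; the only slightly delicate point is that the target coefficient $\tfrac32 = \tfrac12 + 1$ forces the non-symmetric Young weight $t = \left\| \tEd{m}{} \right\|_\infty^{-1}$ in the estimate for $A_2^{(m)}$, and that the boundedness of $\left\| \eEd{m}{} \right\|_\infty$ --- needed to turn $\left\| \eEd{m}{} \right\|_4^4$ into a constant multiple of $\left\| \eEd{m}{} \right\|^2$ --- must be imported from Lemma~\ref{lem_sup} rather than from the error quantities themselves, which are exactly what the convergence proof is trying to control.
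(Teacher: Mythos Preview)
Your proof is correct and essentially identical to the paper's: the paper also splits $A^{(m)}$ into the $\langle \eNd{m}{},\lvert \eEd{m}{}\rvert^2\rangle$, $\langle \tNd{m}{},\lvert \eEd{m}{}\rvert^2\rangle$, and $2\mathrm{Re}\langle \eEd{m}{}\tEd{m}{},\eNd{m}{}\rangle$ pieces, uses the same Young inequalities to obtain coefficients $\tfrac12$ and $1$ on $\|\eNd{m}{}\|^2$, and absorbs $\|\eEd{m}{}\|_\infty$, $\|\tNd{m}{}\|_\infty$, $\|\tEd{m}{}\|_\infty$ into $C_4$ via Lemma~\ref{lem_sup}. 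The only cosmetic difference is that the paper's displayed intermediate bound shows $\|\tNd{m}{}\|_\infty^2$ rather than $\|\tNd{m}{}\|_\infty$ in the constant, but this is immaterial.
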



\begin{proof}
  By definition,
  \begin{align*}
    \left\vert A^{(m)} \right\vert &= \left\vert \left\langle \eNd{m}{} + \tNd{m}{}, \left\vert \eEd{m}{} \right\vert^2 \right\rangle + 2\mathrm{Re} \left\langle \eEd{m}{} \tEd{m}{}, \eNd{m}{} \right\rangle \right\vert\\
    &\leq \frac{3}{2} \left\| \eNd{m}{} \right\|^2 + \left( \frac{1}{2} \left\| \eEd{m}{} \right\|_\infty^2 + \left\| \tNd{m}{} \right\|_\infty^2 + \left\| \tEd{m}{} \right\|_\infty^2 \right) \left\| \eEd{m}{} \right\|^2\\
    &\leq \frac{3}{2} \left\| \eNd{m}{} \right\|^2 + C_4 \left\| \eEd{m}{} \right\|^2
  \end{align*}
  holds for some $C_4>0$.
\end{proof}


This lemma suggests that we introduce a relaxed version of $\mathcal{E}_e^{(m)}$:
\begin{align*}
  \hat{\mathcal{E}}_e^{(m)} &\coloneqq \gamma_1 \left\| \eEd{m}{} \right\|^2 + \left\| \fd_x \eEd{m}{} \right\|^2 + \gamma_2 \left( \left\| \eNd{m}{} \right\|^2 + \left\| \fd_x \eVd{m}{} \right\|^2 \right) + A^{(m)},
\end{align*}
where $\gamma_1, \gamma_2 >0$ are taken large enough in the next lemma so that the term $A^{(m)}$ is canceled out.

\begin{lemma} \label{lem:hEe:bound}
  For all $m$, $0 \leq \mathcal{E}_e^{(m)} \leq \hat{\mathcal{E}}_e^{(m)} $ holds for $\gamma_1 \geq C_4 + 1$ and $\gamma_2 \geq 5/2$.
\end{lemma}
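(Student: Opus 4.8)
The plan is to use the bound on $|A^{(m)}|$ from Lemma~\ref{lem:tH:bound} to absorb the (possibly negative) adjustment term $A^{(m)}$ into the positive quadratic terms of $\hat{\mathcal{E}}_e^{(m)}$, after which both inequalities become elementary coefficient comparisons.

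First I would use $A^{(m)} \geq -|A^{(m)}| \geq -\tfrac{3}{2}\|\eNd{m}{}\|^2 - C_4\|\eEd{m}{}\|^2$, obtained directly from Lemma~\ref{lem:tH:bound}, to get the lower bound
\begin{align*}
  \hat{\mathcal{E}}_e^{(m)}
  \geq (\gamma_1 - C_4)\left\| \eEd{m}{} \right\|^2 + \left\| \fd_x \eEd{m}{} \right\|^2 + (\gamma_2 - \tfrac{3}{2})\left\| \eNd{m}{} \right\|^2 + \gamma_2 \left\| \fd_x \eVd{m}{} \right\|^2.
\end{align*}
Then I would simply note that the hypotheses $\gamma_1 \geq C_4 + 1$ and $\gamma_2 \geq 5/2$ give $\gamma_1 - C_4 \geq 1$, $\gamma_2 - \tfrac{3}{2} \geq 1$, and $\gamma_2 \geq 1$, so that every coefficient on the right-hand side is at least $1$. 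Hence $\hat{\mathcal{E}}_e^{(m)} \geq \|\eEd{m}{}\|^2 + \|\fd_x \eEd{m}{}\|^2 + \|\eNd{m}{}\|^2 + \|\fd_x \eVd{m}{}\|^2 = \mathcal{E}_e^{(m)}$. Finally, since $\mathcal{E}_e^{(m)}$ is a sum of squared norms it is manifestly nonnegative, which closes the chain $0 \leq \mathcal{E}_e^{(m)} \leq \hat{\mathcal{E}}_e^{(m)}$.

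There is essentially no hard part here: the only input beyond definitions is Lemma~\ref{lem:tH:bound}, and the rest is a direct termwise comparison. If anything requires a word of care, it is keeping track of which coefficient each quadratic term picks up after the bound on $A^{(m)}$ is applied (in particular that the $\|\fd_x\eVd{m}{}\|^2$ term is untouched by $A^{(m)}$ and already carries $\gamma_2$), but this is routine.
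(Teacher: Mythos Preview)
Your proof is correct and follows essentially the same route as the paper: apply Lemma~\ref{lem:tH:bound} to bound $A^{(m)}$ from below, compare coefficients term by term, and conclude. The only cosmetic difference is that the paper (slightly wastefully) also subtracts $3/2$ from the $\|\fd_x\eVd{m}{}\|^2$ coefficient, whereas you correctly observe that this term is untouched by $A^{(m)}$; either way the conclusion follows immediately from $\gamma_1 \geq C_4+1$ and $\gamma_2 \geq 5/2$.
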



\begin{proof}
  We see that
  \begin{align*}
    \hat{\mathcal{E}}_e^{(m)}
    &\geq \left( \gamma_1 - C_4 \right) \left\| \eEd{m}{} \right\|^2 + \left\| \fd_x \eEd{m}{} \right\|^2 + \left( \gamma_2 - \frac{3}{2} \right) \left( \left\| \eNd{m}{} \right\|^2 + \left\| \fd_x \eVd{m}{} \right\|^2 \right)\\
    &\geq \left\| \eEd{m}{} \right\|^2 + \left\| \fd_x \eEd{m}{} \right\|^2 + \left\| \eNd{m}{} \right\|^2 + \left\| \fd_x \eVd{m}{} \right\|^2 = \mathcal{E}_e^{(m)},
  \end{align*}
  which implies lemma.
\end{proof}


Now finally we are in the position to complete the proof of Theorem~\ref{thm_convergence}.
We have now
\begin{align*}
  \fd_t \hat{\mathcal{E}}_e^{(m)} &\leq \left( \gamma_1 C_1 + \gamma_2 C_2 + C_3 \right) \left( \mathcal{E}_e^{(m+1)} + \mathcal{E}_e^{(m)} \right) + \gamma' (c_0)^2 \left( (\Dt)^2 + (\Dx)^2 \right)^2\\
  &\leq \left( \gamma_1 C_1 + \gamma_2 C_2 + C_3 \right) \left( \hat{\mathcal{E}}_e^{(m+1)} + \hat{\mathcal{E}}_e^{(m)} \right) + \gamma' (c_0)^2 \left( (\Dt)^2 + (\Dx)^2 \right)^2
\end{align*}
for $\gamma' \coloneqq \gamma_1 + 2\gamma_2 + 4$. 
From Lemma~\ref{lem_gronwall}, we see that
\begin{align*}
  \mathcal{E}_e^{(m)} \leq \hat{\mathcal{E}}_e^{(m)} &\leq 2 \gamma' (c_0)^2T \left( (\Dt)^2 + (\Dx)^2 \right)^2 \mathrm{exp} \left( 4 \left( \gamma_1 C_1 + \gamma_2 C_2 + C_3 \right) T \right)\\
  &= C^2((\Dt)^2 + (\Dx)^2)^2
\end{align*}
holds for sufficiently small $\Dt, \Dx$ and some $C>0$.
This is the desired estimate.

\section{Numerical experiments}

In this chapter we compare the DVDM scheme in \eqref{sch_DVDM} with Glassey's scheme in \eqref{sch_Glassey} by numerical experiments.
Although we mainly use the notation in \cite{Payne}, we also need to deal with $V(t, x)$, and we need to make some additional definitions.


\subsection{Implementation of schemes}

\subsubsection{Glassey's scheme}

In the implementation of the Glassey's scheme \eqref{sch_Glassey}, the choice of $\Nd{1}{}$ is a challenge: the $\Nd{1}{}$ defined in Glassey's original paper \cite{Glassey} was of first-order precision. The $\Nd{1}{}$ defined by Payne in \cite{Payne} is of second-order precision, which seems reasonable at first glance: let us denote this case as (GP). However, (GP) is concerned with the effects of distortions in the energy. Therefore, we propose a new method for selecting $\Nd{1}{}$ that is less sensitive to distorted energy: let us denote this case as (GN).

The basic structure is as follows:

\begin{enumerate}
  \item $\Ed{0}{}, \Nd{0}{}$ and $(N_t(0,  k\Dx))_k$ are obtained by the initial conditions.
  \item $\Nd{1}{}$ is calculated (explained later).
  \item $\Ed{1}{}, \Nd{2}{}, \Ed{2}{}, \ldots$ are calculated alternately.
  \begin{enumerate}
    \item[(a)] If $\Ed{m}{}, \Nd{m}{}$ and $\Nd{m+1}{}$ are known, then, $\Ed{m+1}{}$ is calculated by \eqref{sch_Glassey1}.
    Here, considering the real part and the imaginary part, $2 \mathrm{i} I + \Dt \left\{ D_x^{\left\langle 2 \right\rangle} - \mathrm{diag} \left( \fa_t \Nd{m}{k} \right) \right\}$ is regarded as a $2K$-order sparse matrix consisting of $8K$ non-zero components.
    \item[(b)] If $\Ed{m}{}, \Nd{m}{}$ and $\Nd{m-1}{}$ are known, then, $\Nd{m+1}{}$ is calculated by \eqref{sch_Glassey2}, where $I - (\Dt)^2 D_x^{\left\langle 2 \right\rangle}/2$ is considered to be a $K$-order sparse matrix consisting of $3K$ nonzero components and independent of $m$.
    Therefore, by computing its LU decomposition in advance, $\Nd{m+1}{}$ can be computed faster.
  \end{enumerate}
\end{enumerate}
Since each matrix can be considered to be sparse, the computational complexity is estimated to be $\mathrm{O}(KM)$ for any $\Nd{1}{}$.

Here, in order to examine the accuracy of the conservation of energy, we define $\Vd{m}{} \coloneqq (D_x^{\left\langle 2 \right\rangle})^\dagger \fd_t \Nd{m}{}$, where $A^\dagger$ is the pseudo-inverse matrix of $A$.
This $\Vd{m}{}$ minimizes $\| \fd_t \Nd{m}{} - \cdd_x \Vd{m}{} \|$ but is not an exact solution of $\fd_t \Nd{m}{} = \cdd_x \Vd{m}{}$. This means $\mathcal{E}_G^{(m+1)}$ is not completely conserved.

Glassey's original study \cite{Glassey2} addresses this problem as follows: they determine $\Nd{1}{k}$ by $(N_\mathrm{G})_k \coloneqq \Nd{0}{k} + \Dt N^1(k\Dx)$ and further require the assumption
\begin{align}\label{eq_assumption2}
  \sum_{k=1}^K N^1(k\Dx) = 0.
\end{align}
This means
\begin{align}\label{eq_assumption3}
  \sum_{k=1}^K \fd_t \Nd{m}{k} = 0,
\end{align}
which allows them to determine $\Vd{0}{}$.
However, this $N_\mathrm{G}$ is first-order precision, which is inadequate, and the assumption \eqref{eq_assumption2} is unrealistic.

In \cite{Payne}, which proposed a scheme similar to Glassey's, $\Nd{1}{}$ with second-order accuracy was defined by Taylor expansion as
\begin{align*}
\hspace{-1mm} N(\Dt, k\Dx) &= N(0, k\Dx) + \Dt N_t(0,k\Dx) + \frac{1}{2}(\Dt)^2 N_{tt}(0,k\Dx) + \cdots\\
&\simeq \Nd{0}{k} + \Dt N^1(k\Dx) + \frac{1}{2} (\Dt)^2 \cdd_x \left( \Nd{0}{k} + \left\vert \Ed{0}{k} \right\vert^2 \right) \eqqcolon (N_\mathrm{GP})_k,
\end{align*}
which implies $(N_\mathrm{GP})_k = N(\Dt, k\Dx) + \mathrm{O}\left( (\Dt)^2 + (\Dx)^2 \right)$. However, this $N_\mathrm{GP}$ does not satisfy \eqref{eq_assumption3}. Instead, we define $\Nd{1}{k}$ by
\begin{align}\label{eq_N1def}
  (N_\mathrm{GN})_k \coloneqq (N_\mathrm{GP})_k - \frac{1}{L} \sum_{k=1}^K \left( (N_\mathrm{GP})_k - \Nd{0}{k} \right) \Dx,
\end{align}
so that \eqref{eq_assumption3} holds directly.

Here, $(N_\mathrm{GN})_k = N(\Dt, k\Dx) + \mathrm{O}\left( (\Dt)^2 + (\Dx)^2 \right)$ is shown as follows: since
\begin{align*}
  \frac{\rd}{\dt} \int_0^L N_{t} (t,x) \dx = \int_0^L N_{tt} (t,x) \dx = \int_0^L ( N + \vert E \vert^2 )_{xx} (t,x) \dx = 0,
\end{align*}
we have
\begin{align*}
  \sum_{k=1}^K \left( N(\Dt,k\Dx) - \Nd{0}{k} \right) \Dx &= \int_0^L N_t(\Dt/2, x) \dx + \mathrm{O}\left( (\Dt)^2 + (\Dx)^2 \right)\\
  &= \int_0^L N^1(x) \dx + \mathrm{O}\left( (\Dt)^2 + (\Dx)^2 \right)\\
  &= \mathrm{O}\left( (\Dt)^2 + (\Dx)^2 \right),
\end{align*}
which implies the conclusion.

\subsubsection{DVDM scheme}

In constructing the DVDM scheme, we have several choices: which to choose \eqref{sch_DVDM} or \eqref{sch_DVDM2}, and how to find approximate solutions of the nonlinear equations. We also have performed numerical experiments to determine which option is best, but we omit them for reasons of space. In the following, we describe the DVDM scheme when the simplified Newton method is used for \eqref{sch_DVDM2}.

The basic structure is as follows:

\begin{enumerate}
  \item $\Ed{0}{}, \Nd{0}{}$ and $\Vd{0}{}$ are obtained by the initial conditions.
  \item $\Ed{1}{}, \Nd{1}{}$ and $\Vd{1}{}$ are calculated by \eqref{sch_DVDM}.
  \item $\left( \Ed{m+1}{}, \Nd{m+1}{}\right)\, (m=1,2,\ldots)$ is calculated by \eqref{sch_DVDM2} in order.
  \item $\Vd{m}{} (m=2,3,\ldots)$ is calculated by the third equation in \eqref{sch_DVDM}.
\end{enumerate}

In the third step, we deal with nonlinear maps resulting from the definition of the DVDM scheme in \eqref{sch_DVDM2}, i.e., the $3K$-order nonlinear mapping $F_\mathrm{EN}^{(m)}(E, N)$ corresponding to \eqref{sch_DVDM2}, where the equation $F_\mathrm{EN}^{(m)} \left( \Ed{m+1}{}, \Nd{m+1}{} \right) = 0$ is equivalent to \eqref{sch_DVDM2}.
We write this in an abstract form as $F^{(m)}(x) = 0$, letting $F^{(m)} \coloneqq F_\mathrm{EN}^{(m)}$ and $x \coloneqq (E, N)$.\\

In order to solve this equation, 
we consider an iteration by $x^{(l+1)} \coloneqq x^{(l)} - r^{(l)}$ $(l=0,1,\ldots)$ for an initial point $x^{(0)}$.
The $r^{(l)}$ used in this case is assumed to satisfy 
$\nabla F^{(m)} \left( x^{(0)} \right) r^{(l)} = F^{(m)} \left( x^{(l)} \right)$. 
Notably, 
$\nabla F_\mathrm{EN}^{(m)}$ is considered to be a $3K$-order sparse matrix consisting of $19K$ nonzero components.

Here, the initial point $x^{(0)}$ of the iteration is defined as $\left( \Ed{m+1}{}, \Nd{m+1}{}\right)$ satisfying the Glassey's scheme in \eqref{sch_Glassey}. 
Although it is computationally time-consuming since it involves running Glassey's scheme in its entirety, this is chosen because of the stable and early convergence of the iterative method.

In addition, the stopping condition is $\left\| F^{(m)} \left( x^{(l)} \right) \right\| \leq \varepsilon$.
In the following, unless otherwise noted, $\varepsilon = 10^{-8}$ is fixed. Note that this means that discrete invariants are only preserved up to this accuracy.\\

As a whole, the computation amount of (D) is still estimated as $\mathrm{O}(KM)$, although it is definitely larger than that of (G).




\subsection{Solitary wave solution}

In order to describe a solitary wave solution of the Zakharov equations, we introduce Jacobi's elliptic function (shown in~\cite{Jacobi}, for example).

We assume that $q \in [0,1)$ and $u, \phi$ are satisfying
\begin{align*}
  u = \int_0^\phi \frac{\mathrm{d}\theta}{\sqrt{1-q\sin^2\theta}},
\end{align*}
then, the Jacobi's elliptic functions are defined as
\begin{align*}
  \sn (u, q) \coloneqq \sin \phi, \quad \cn (u,q) \coloneqq \cos \phi, \quad \dn (u,q) \coloneqq \sqrt{1-q\sin^2\phi}.
\end{align*}


\begin{lemma}
  The following properties hold.
  \begin{enumerate}
    \item We have $\sn (u,q)^2 + \cn (u,q)^2 = \dn (u,q)^2 + q \sn (u,q)^2 = 1$.
    \item We have $\dfrac{\partial}{\partial u} \sn (u,q) = \cn (u,q) \dn (u,q)$, $\dfrac{\partial}{\partial u} \cn (u,q) = - \sn(u,q) \dn(u,q)$ and $\dfrac{\partial}{\partial u} \dn (u,q) = -q \sn (u,q) \cn (u,q)$.
    \item We have $\dn ( u + 2K(q), q) = \dn (u,q)$, where ${\displaystyle K(q) \coloneqq \int_0^{\pi/2} \dfrac{\mathrm{d}\theta}{\sqrt{1 - q\sin^2\theta}}}$ is the complete elliptic integral of the first kind.
  \end{enumerate}
\end{lemma}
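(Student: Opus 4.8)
The plan is to work entirely from the defining relation $u = \int_0^\phi \left(1 - q\sin^2\theta\right)^{-1/2}\,\mathrm{d}\theta$, regarded as an implicit definition of the angle $\phi = \phi(u, q)$, so that $\sn(u,q) = \sin\phi$, $\cn(u,q) = \cos\phi$ and $\dn(u,q) = \sqrt{1 - q\sin^2\phi}$. Part~(1) is then immediate: $\sn^2 + \cn^2 = \sin^2\phi + \cos^2\phi = 1$, while $\dn^2 + q\sn^2 = \left(1 - q\sin^2\phi\right) + q\sin^2\phi = 1$ is nothing but the definition of $\dn$ rearranged.

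For part~(2) I would first observe that, since $q \in [0,1)$, the integrand $\left(1 - q\sin^2\theta\right)^{-1/2}$ is continuous and bounded below by $1$, so $\phi \mapsto u(\phi)$ is a strictly increasing $C^1$ bijection of $\RR$ onto $\RR$; hence the inverse $u \mapsto \phi(u)$ is well defined and $C^1$. Differentiating the defining relation with respect to $u$ gives $1 = \left(1 - q\sin^2\phi\right)^{-1/2}\,\partial_u\phi$, that is, $\partial_u\phi = \sqrt{1 - q\sin^2\phi} = \dn(u,q)$. The three derivative formulas then drop out by the chain rule: $\partial_u\sn = \cos\phi\,\partial_u\phi = \cn\,\dn$, $\partial_u\cn = -\sin\phi\,\partial_u\phi = -\sn\,\dn$, and $\partial_u\dn = \partial_u\!\left(1 - q\sin^2\phi\right)^{1/2} = -q\sin\phi\cos\phi\left(1 - q\sin^2\phi\right)^{-1/2}\partial_u\phi = -q\sn\,\cn$.

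For part~(3) I would use that $\theta \mapsto \sin^2\theta$ has period $\pi$, so that $\int_\phi^{\phi+\pi}\left(1 - q\sin^2\theta\right)^{-1/2}\,\mathrm{d}\theta = \int_0^\pi\left(1 - q\sin^2\theta\right)^{-1/2}\,\mathrm{d}\theta$ for every $\phi$; splitting this integral at $\pi/2$ and applying the substitution $\theta \mapsto \pi - \theta$ on $[\pi/2, \pi]$ shows it equals $2K(q)$. Consequently $u(\phi + \pi) = u(\phi) + 2K(q)$, and inverting yields $\phi\left(u + 2K(q)\right) = \phi(u) + \pi$. Therefore $\dn\left(u + 2K(q), q\right) = \sqrt{1 - q\sin^2\!\left(\phi(u) + \pi\right)} = \sqrt{1 - q\sin^2\phi(u)} = \dn(u,q)$, again by the $\pi$-periodicity of $\sin^2$.

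The whole argument is essentially bookkeeping with the definition, so I do not expect a genuine obstacle. The only point worth a careful line is the justification that the defining integral is invertible and that incrementing $\phi$ by $\pi$ corresponds \emph{exactly} to incrementing $u$ by $2K(q)$ — this is precisely the identity that powers part~(3), and it rests on the symmetry of $\sin^2$ about $\theta = \pi/2$.
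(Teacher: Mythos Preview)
Your argument is correct and self-contained. The paper, however, does not prove this lemma at all: it is stated as a list of standard identities for Jacobi's elliptic functions, implicitly deferred to the reference \cite{Jacobi}, and the text immediately moves on to use them. So there is no ``paper's own proof'' to compare against. What you have written is a clean derivation straight from the defining integral $u = \int_0^\phi (1 - q\sin^2\theta)^{-1/2}\,\mathrm{d}\theta$, and each step (the Pythagorean identities in part~(1), the chain-rule computations via $\partial_u\phi = \dn$ in part~(2), and the $\pi$-periodicity of $\sin^2$ giving $u(\phi+\pi) = u(\phi) + 2K(q)$ in part~(3)) is valid. Your proof would serve well as a replacement for the bare citation if one wanted the paper to be more self-contained.
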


Using these definitions, the solitary wave solution satisfying the Zakharov equations \eqref{eq_Zak} is written as follows:
\begin{align}\label{eq_soliton1}
  \begin{cases}
    E(t, x) = \Emax \dn \left( \dfrac{\Emax}{\sqrt{2(1-v^2)}}(x-vt), q \right) \exp \left( \mathrm{i} \phi (x - ut) \right),\\
    N(t, x) = -\dfrac{\Emax^2}{1-v^2} \dn \left( \dfrac{\Emax}{\sqrt{2(1-v^2)}}(x-vt), q \right)^2 + N_0,
  \end{cases}
\end{align}
where $\Emax, v, q, \phi, u, N_0$ (and period $L$) are the parameters.
Notably, the condition for \eqref{eq_soliton1} to satisfy the Zakharov equations \eqref{eq_Zak} is
\begin{align*}
  \phi = \frac{v}{2},\quad u = \frac{v}{2} + \frac{2N_0}{v} - \frac{ 2-q }{ v(1-v^2) } \Emax^2,
\end{align*}
and the condition for \eqref{eq_soliton1} to satisfy the periodic boundary condition \eqref{eq_period} is
\begin{align*}
  L = \frac{2 \sqrt{2(1-v^2)}}{\Emax} K(q),\quad \frac{v}{2} L = 2 \pi m\quad (m \in \mathbb{Z}),
\end{align*}
where $m$ can be an integer, but in the following it is unified as $m=1$, unless otherwise noted.
At this point, $\Emax, N_0$ and $L$ are left as undetermined parameters.

While the above discussion is based on \cite{Payne}, in this study we also need to consider the definition of $V$ corresponding to this solitary wave solution.


\begin{lemma}\label{lem_solitonV}
  Let $E(t,x), N(t,x)$ be a solution of \eqref{eq_Zak} satisfying \eqref{eq_soliton1}.
  Then, $V(t,x)$ satisfying $N_t =V_{xx},\, V_t =N+\vert E \vert^2$ is described as follows:
  \begin{align}\label{eq_soliton2}
    V(t, x) = \dfrac{\sqrt{2} v \Emax}{\sqrt{1-v^2}} E_2(\varphi_V(x-vt), q) - \frac{N_0}{v} (x-vt) + V_0,
  \end{align}
  where $V_0$ is a integral constant, ${\displaystyle E_2(\varphi, q) \coloneqq \int_0^\varphi \sqrt{1-q\sin^2\phi}\mathrm{d}\phi}$ is the incomplete elliptic integral of the second kind, and
  \begin{align*}
    \varphi_V(x) \coloneqq l\pi + \sin^{-1} \left( \sn \left( \dfrac{\Emax}{\sqrt{2(1-v^2)}}x -2lK(q)\right),q \right)
  \end{align*}
  for $(l-1/2)L < x \leq (l+1/2)L$, $l \in \mathbb{Z}$.
\end{lemma}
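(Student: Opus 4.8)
The plan is to verify directly that the stated formula for $V$ satisfies the two defining equations $N_t = V_{xx}$ and $V_t = N + \vert E\vert^2$. Since everything is a function of the single variable $\xi \coloneqq x - vt$, I would first reduce both PDEs to ODEs in $\xi$: writing $V(t,x) = W(\xi)$ we have $V_t = -vW'(\xi)$ and $V_{xx} = W''(\xi)$, and similarly $N(t,x) = \mathcal{N}(\xi)$, $\vert E(t,x)\vert^2 = \Emax^2\,\dn(\beta\xi,q)^2$ where $\beta \coloneqq \Emax/\sqrt{2(1-v^2)}$. So the two equations become $-v\,\mathcal{N}'(\xi) = W''(\xi)$ and $-v\,W'(\xi) = \mathcal{N}(\xi) + \Emax^2\dn(\beta\xi,q)^2$. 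The second of these, after substituting the explicit $\mathcal{N}(\xi) = -\frac{\Emax^2}{1-v^2}\dn(\beta\xi,q)^2 + N_0$, is the one to check first; the first then follows by differentiating the second in $\xi$ and using $-v\mathcal{N}' = W''$ consistently, or can be checked independently.

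The key computation is to differentiate $W(\xi) = \frac{\sqrt2\, v\Emax}{\sqrt{1-v^2}} E_2(\varphi_V(\xi),q) - \frac{N_0}{v}\xi + V_0$. By the fundamental theorem of calculus, $\frac{\rd}{\rd\varphi}E_2(\varphi,q) = \sqrt{1-q\sin^2\varphi}$, so I need $\varphi_V'(\xi)$. On each interval $(l-1/2)L < \xi \le (l+1/2)L$ we have $\varphi_V(\xi) = l\pi + \sin^{-1}\!\big(\sn(\beta\xi - 2lK(q),q)\big)$; writing $s \coloneqq \sn(\beta\xi - 2lK(q),q)$, the chain rule together with Lemma~2.x(2) (i.e. $\frac{\partial}{\partial u}\sn(u,q) = \cn(u,q)\dn(u,q)$) and $\frac{\rd}{\rd s}\sin^{-1}s = 1/\sqrt{1-s^2}$ gives $\varphi_V'(\xi) = \beta\,\cn\dn/\sqrt{1-s^2} = \beta\dn$, using $\cn^2 = 1 - \sn^2$ from property (1). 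Next, since $\sin\varphi_V(\xi) = \sin(\sin^{-1}s) = s = \sn(\beta\xi - 2lK(q),q)$, property (1) gives $1 - q\sin^2\varphi_V = \dn(\beta\xi - 2lK(q),q)^2$, and by the $2K(q)$-periodicity of $\dn$ (property (3)) this equals $\dn(\beta\xi,q)^2$. Hence $\frac{\rd}{\rd\xi}E_2(\varphi_V(\xi),q) = \sqrt{1-q\sin^2\varphi_V}\cdot\varphi_V' = \dn(\beta\xi,q)\cdot\beta\dn(\beta\xi,q) = \beta\,\dn(\beta\xi,q)^2$. Multiplying by the prefactor $\frac{\sqrt2\,v\Emax}{\sqrt{1-v^2}}\cdot\beta = \frac{\sqrt2\,v\Emax}{\sqrt{1-v^2}}\cdot\frac{\Emax}{\sqrt{2(1-v^2)}} = \frac{v\Emax^2}{1-v^2}$, I get $W'(\xi) = \frac{v\Emax^2}{1-v^2}\dn(\beta\xi,q)^2 - \frac{N_0}{v}$, so $-vW'(\xi) = -\frac{v^2\Emax^2}{1-v^2}\dn^2 + N_0$. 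On the other hand $\mathcal{N}(\xi) + \Emax^2\dn^2 = -\frac{\Emax^2}{1-v^2}\dn^2 + N_0 + \Emax^2\dn^2 = \Emax^2\dn^2\big(1 - \frac{1}{1-v^2}\big) + N_0 = -\frac{v^2\Emax^2}{1-v^2}\dn^2 + N_0$, which matches; this verifies $V_t = N + \vert E\vert^2$. Differentiating once more gives $W''(\xi) = \frac{v\Emax^2}{1-v^2}\cdot 2\dn\cdot(-q\beta\,\sn\cn)$ (using property (2) for $\frac{\partial}{\partial u}\dn$), and one checks this equals $-v\mathcal{N}'(\xi)$ directly from $\mathcal{N}(\xi) = -\frac{\Emax^2}{1-v^2}\dn(\beta\xi,q)^2 + N_0$; this verifies $N_t = V_{xx}$.

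The main obstacle is not any single differentiation but the bookkeeping at the interval endpoints $\xi = (l+1/2)L$: one must check that $\varphi_V$ is genuinely $C^1$ across these points (the branch index $l$ jumps by $1$, and $\sin^{-1}$ is only defined on $[-1,1]$, so the definition is arranged precisely so that $\sin^{-1}\sn$ increases monotonically through $\pm\pi/2$ as $\beta\xi - 2lK(q)$ passes through odd multiples of $K(q)$, where $\sn = \pm1$). One should confirm that $\varphi_V$ is continuous and that its derivative $\beta\dn(\beta\xi,q)$ — which is continuous and nowhere zero — extends across the endpoints, so that $W$, hence $V$, is $C^2$ in $x$ and the pointwise ODE verification is valid everywhere. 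The constant of integration $V_0$ (and the normalization $V(0,0)=0$ from \eqref{eq_Vdef}) only shifts $V$ and does not affect $V_{xx}$ or $V_t$, so it plays no role beyond fixing $V_0$ at the end. Finally, one notes that because $N^1$ has zero mean the function $V$ so constructed is consistent with the periodicity requirement \eqref{eq_period2} up to the linear drift term $-\frac{N_0}{v}(x-vt)$, whose presence reflects that the soliton parameters must be chosen with $N_0$ compatible with \eqref{eq_assumption}; I would remark on this but not belabor it.
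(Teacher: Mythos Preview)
Your direct verification is correct and is the natural approach; the paper in fact omits the proof of this lemma entirely, so there is nothing to compare against. One small slip worth fixing: when you write $\sin\varphi_V(\xi) = \sin(\sin^{-1}s) = s$ you have dropped the $l\pi$ shift, so for odd $l$ one actually gets $\sin\varphi_V = -s$; however, since only $\sin^2\varphi_V$ enters, the identity $1 - q\sin^2\varphi_V = \dn^2$ and the rest of the computation are unaffected (similarly, the cancellation $\cn/\sqrt{1-s^2}=1$ uses $\cn\ge 0$, which holds precisely because $\beta\xi - 2lK(q)\in(-K(q),K(q)]$ on each piece).
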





In order for $V(t, x)$ to satisfy the periodic boundary condition at the time $t=0$, we needs
\begin{align}\label{eq_N0}
  N_0 = \frac{2\sqrt{2}v^2 \Emax}{L\sqrt{1-v^2}} E_2 \left( \frac{\pi}{2},q \right).
\end{align}
However, in the previous studies \cite{Payne, Glassey2}, the parameter $N_0$ is defined as
\begin{align*}
  N_0 = \frac{2\sqrt{2} \Emax}{L\sqrt{1-v^2}} E_2 \left( \frac{\pi}{2}, q \right)
\end{align*}
so that $\int_0^L N(t, x)\dx = 0$ is satisfied, which is inconsistent with \eqref{eq_N0}.
How this discrepancy should be interpreted is very important, but at least in this study, we adopt \eqref{eq_N0} because the periodicity of $V(t,x)$ is extremely important for the DVDM scheme.
In this case, if we choose parameters $\Emax$ and $L$, the other parameters are determined from the above conditions.


\subsection{Experiment 1 : Single solitary wave solution case}

In this section, we run Glassey's scheme and the DVDM scheme with the single solitary wave solution described in the previous section.
The parameter $L$ is fixed to $L=20$ and for the other parameter $\Emax$ we consider several values (the case $\Emax=1$ corresponds to Glassey's previous study \cite{Glassey2}).
For each $\Emax$, the temporal and spatial mesh sizes are halved as $\Dt = \Dx = 0.1, 0.05, 0.025, \ldots$.

The goal time $T$ is mainly set to the time $T_L\coloneqq L/v$, which is the time until the solitary wave comes around once in the spatial domain.
However, when $\Emax$ is large, the phase velocity $\phi$ of $E$ is very large and convergence becomes extremely slow. In that case, the time $T_1 \coloneqq 1/v = T_L/L$ for the solitary wave to advance just 1 may be chosen instead.

The results of the numerical experiments are evaluated by the following indices: the initial value of energy $\mathcal{E}^{(0)} = \mathcal{E}_\mathrm{G}^{(1)}$ or $\mathcal{E}_\mathrm{D}^{(0)}$, the maximum error of energy $d\mathcal{E} \coloneqq \max_m \vert \mathcal{E}_\mathrm{G}^{(m+1)} - \mathcal{E}_\mathrm{G}^{(1)} \vert$ or $\max_m \vert \mathcal{E}_\mathrm{D}^{(m)} - \mathcal{E}_\mathrm{D}^{(0)} \vert$, the maximum error of each component $\varepsilon_E \coloneqq \max_m \| \eEd{m}{} \|$ and $\varepsilon_N$, and the run time.

The execution environment is Macbook Air (M1, 2020, 8GB), Python 3.10.4. The actual code used is shown in \url{https://github.com/kawai-sk/Zakharov}.


\subsubsection{One cycle case}

First, we show the results when $\Emax = 1,5$ and $T=T_L$ in Table \ref{table_exp1-1}.

\begin{table}[hptb]
  \caption{Results of Experiment 1, one cycle case}
  \label{table_exp1-1}
  \centering
  \begin{tabular}{ l l l l l l l l }
    \hline
    $E_{\mathrm{max}}$ & $\Dt$ & & $\mathcal{E}^{(0)}$ & $d\mathcal{E}$ & $\varepsilon_E$ & $\varepsilon_N$ & time \\ \hline



    1 & 0.1 & (GP) & 1.01 & e-6 & 9.3e-2 & 9.8e-2 & 0.254s \\
    & & (GN) & 1.01 & e-13 & 8.4e-2 & 9.8e-2 & 0.256s \\
    & & (D) & 1.01 & e-8 & 2.1e-2 & 4.3e-2 & 2.74s \\ \cline{2-8}

    & 0.05 & (GP) & 1.01 & e-8 & 2.2e-2 & 2.4e-2 & 0.675s \\
    & & (GN) & 1.01 & e-12 & 2.1e-2 & 2.4e-2 & 0.685s \\
    & & (D) & 1.00 & e-11 & 5.3e-3 & 1.0e-2 & 8.06s \\ \hline










    5 & 0.1 & (GP) & 125.41 & 6.40 & 6.59 & 31.2 & 0.247s \\
    & & (GN) & 125.20 & e-11 & 6.59 & 31.1 & 0.256s \\
    & & (D) & 115.27 & e-5 & 6.62 & 30.8 & 3.51s \\ \cline{2-8}

    & 0.05 & (GP) & 120.00 & 0.12 & 6.63 & 27.0 & 0.691s \\
    & & (GN) & 129.98 & e-11 & 6.63 & 27.0 & 0.691s \\
    & & (D) & 117.62 & e-4 & 6.63 & 12.9 & 7.98s \\ \cline{2-8}

    & 0.025 & (GP) & 118.81 & e-3 & 6.63 & 7.69 & 2.12s \\
    & & (GN) & 118.81 & e-10 & 6.63 & 7.69 & 2.11s \\
    & & (D) & 118.22 & e-5 & 6.63 & 3.09 & 28.2s \\ \cline{2-8}

    & 0.0125 & (GP) & 118.52 & e-5 & 6.59 & 1.88 & 7.07s \\
    & & (GP) & 118.52 & e-10 & 6.60 & 1.88 & 7.05s \\
    & & (D) & 118.38 & e-6 & 6.63 & 0.76 & 84.0s \\ \hline
  \end{tabular}
\end{table}

We can see that, when $\Emax$ (and $\phi$) is not sufficiently small, the quadratic convergence of the error can be observed only for small $\Dt$'s.
Note that, even when $ \Dt $ is not sufficiently small, the error with respect to $E$ is bounded because $\left\| \Ed{m}{} \right\|_\infty$ is bounded as shown in Lemma~\ref{lem_sup}.

The following is a detailed description of each item:
\begin{itemize}
  \item The accuracy of energy conservation in (GP) decreases as $\Emax$ increases, while that of (D) or (GN) remain sufficiently accurate. Except for this large difference and the small difference in $\varepsilon_E$, (GP) and (GN) exhibit roughly the same behavior (they are sometimes collectively denoted as (G)).
  \item For any $\Emax$, the execution time of (G) is constant and more than 10 times shorter than that of (D) for the same $\Dt$.
  \item The error with respect to the same $\Dt$ is directly proportional to $\Emax$.
  Although (D) shows a slight advantage with respect to approximation accuracy, it is not enough to overcome the disadvantage in execution time.
\end{itemize}



\subsubsection{Short time case}

Next, we show the results when $\Emax = 5$ and $T = T_1$ in Table \ref{table_exp1-3}.
To distinguish it from the case $T = T_L$, the maximum error in the case $T=T_1$ is denoted as $\varepsilon'_E$ and $\varepsilon'_N$ instead of $\varepsilon_E$ and $\varepsilon_N$.

\begin{table}[htb]
  \caption{Results of Experiment 1, short time case}
  \label{table_exp1-3}
  \centering
  \begin{tabular}{ l l l l l l l l }
    \hline
    $E_{\mathrm{max}}$ & $\Dt$ & & $\mathcal{E}^{(0)}$ & $d\mathcal{E}$ & $\varepsilon'_E$ & $\varepsilon'_N$ & time \\ \hline
    5 & 0.1 & (GP) & 126.1 & 0.017 & 6.58 & 14.53 & 0.0137s \\
    & & (GN) & 126.1 & e-12 & 6.58 & 14.53 & 0.0183s \\
    & & (D) & 115.3 & e-6 & 6.63 & 7.21 & 0.157s \\ \cline{2-8}

    & 0.05 & (GP) & 120.1 & e-4 & 5.98 & 3.19 & 0.0363s \\
    & & (GN) & 120.1 & e-11 & 5.99 & 3.19 & 0.0420s \\
    & & (D) & 117.6 & e-6 & 6.32 & 1.08 & 0.506s \\ \cline{2-8}

    & 0.025 & (GP) & 118.8 & e-6 & 1.94 & 0.64 & 0.124s \\
    & & (GN) & 118.8 & e-11 & 1.94 & 0.64 & 0.124s \\
    & & (D) & 118.2 & e-7 & 2.21  & 0.24 & 1.15s \\\cline{2-8}

    & 0.0125 & (GP) & 118.52 & e-8 & 0.49 & 0.15 & 0.398s \\
    & & (GN) & 118.52 & e-11 & 0.50 & 0.15 & 0.405s \\
    & & (D) & 118.37 & e-7 & 0.57 & 0.06 & 4.43s \\ \hline 







  \end{tabular}
\end{table}

We can confirm that $E$ is also convergent in the short time. Basically, the same trend is observed as in the $T=T_L$ case, but the energy conservation accuracy of (GP) is noteworthy.
While the execution time and the error of each scheme varies linearly for $T=T_L$, less than about $L=20$ times that for $T=T_1$, this is not the case only for the conservation of energy in (GP).
For example, when $\Dt = 0.1$, $d\mathcal{E}$ for $T=T_L$ is more than 500 times larger than for $T=T_1$.
This suggests that the conservation accuracy of energy in (GP) deteriorates more rapidly than linearly.

\subsubsection{Long time case}

Based on the previous results, we are concerned that the accuracy of the energy conservation in (GP) seems to be unstable with respect to time.
To gain more insight into this point, we present the results of an experiment conducted at time $T=20T_L$, where the single wave soliton solution comes around 20 periods.

Here, the graph (Figure \ref{figure_exp3}) shows the evolution of the error versus time for $\Emax=2, \Dt=0.025$. This would be a notable example in several respects.

\begin{figure}[htbp]

  \centering
  \includegraphics[width=12.5cm]{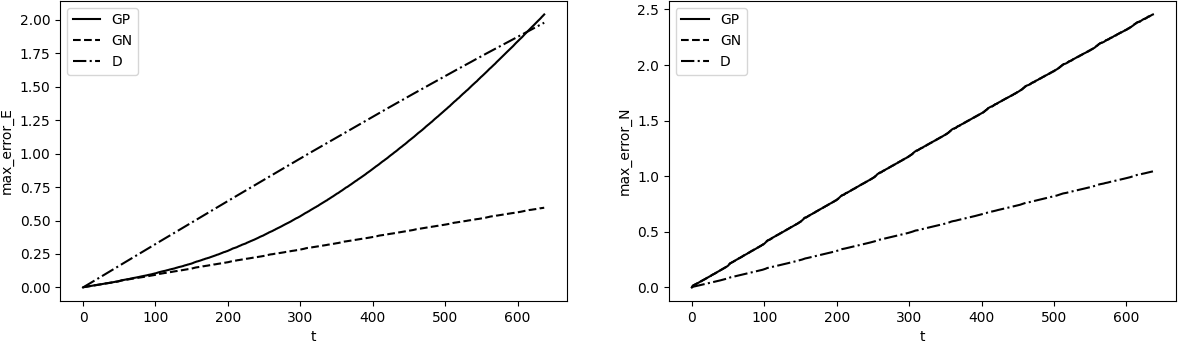}
  \caption{The errors for $\Emax=2, \Dt=0.025, T=20T_L$}
  \label{figure_exp3}
\end{figure}

The conservation accuracy of energy in (GP) deteriorates with time, so the error of $E$ increases nonlinearly. As a result, (GP) can be less favorable than (D) in long-time experiments. This means that the energy distortion has a negative effect on the (GP) proposed in previous studies. However, our proposed (GN) overcomes this drawback.


\subsubsection{Summary of this section}

As theoretically shown by Theorem \ref{thm_convergence}, the quadratic convergence is numerically demonstrated for many $\Emax$.
For the same $\Dt = \Dx$, there is no significant difference between (D) and (G) in accuracy for $E$ and $N$.
In terms of execution time, however, (G) is by far superior.
Instability due to distortion of conserved quantities corresponding to energy is a concern of (GP), but this issue has been resolved in our proposal (GN).
Therefore, as an overall evaluation, (GN) is generally considered superior.

\subsection{Experiment 2 : The collision of two solitary waves}

Now, we consider a more realistic situation in which two solitary waves collide.
In this case, as in the previous studies \cite{Chang, Payne, Glassey2}, we consider two solutions that are determined for the same $\Emax$ and $L=20$ in the long computational interval $8L=160$.
We assume that they have opposite velocities (i.e., one has $m = 1$ and the other has $m = -1$).

In some previous studies, the result for very small $\Dt$ is considered as ``exact solution'' and the errors from the numerical solutions are evaluated.
In this study as well, we consider the case $\Dt=0.0125$ for (GN) as the ``exact solution''.
Since it is difficult to make $\Dt$ even smaller due to the execution environment, we treat only $\Emax = 1, 2$.

The major difference from the previous study is that we must also deal with $V(t, x)$ to perform (D), which requires a deeper consideration of the initial conditions.


\subsubsection{Initial condition}

Let $E_\pm(t, x), N_\pm(t,x)$ and $V_\pm(t,x)$ be a single solitary wave solution that can be written in the form \eqref{eq_soliton1}, \eqref{eq_soliton2} for some $\Emax$, $L=20$ and $m=\pm 1$, where the solution for $m=1 (-1)$ moves in the x-axis positive (negative) direction.
Then, the initial condition for $E$ in this soliton collision experiment is naturally defined as follows:

\begin{align}\label{eq_collision1}
  E_0(x) \coloneqq E(0, x) =
  \begin{cases}
    E_+(0, -L/2) = 0 &\text{if } x\in [0, 3L)\\
    E_+(0, x - 7L/2) &\text{if } x \in [3L, 4L)\\
    E_+(0, L/2) = E_-(0, -L/2) = 0 &\text{if } x=4L\\
    E_-(0, x - 9L/2) &\text{if } x \in (4L, 5L)\\
    E_-(0, L/2) = 0 &\text{if } x \in [5L, 8L)\\
  \end{cases}
  .
\end{align}
Similar definitions hold for $N_0(x) \coloneqq N(0,x)$ and $(N_t)_0(x) \coloneqq N_t(0,x)$, which satisfy the periodic boundary condition of the length $8L$.
However, if we denote $V_1(x) \coloneqq V(0,x)$ for a similarly defined initial condition of $V$, this leads to a contradiction.
In other words, $(N_t)_0 \neq (V_1)_{xx}$ holds due to the non-differentiability of $V_1$.

A way to circumvent this difficulty is to employ $V(0, x)$ corresponding to $(N_t)_0$ via the following lemma.


\begin{lemma}
  Let $E_0(x), N_0(x)$ and $(N_t)_0(x)$ be the initial solution of \eqref{eq_Zak} satisfying \eqref{eq_collision1} and so on.
  In addition, we assume that $V(t, x)$ satisfies
  \begin{align*}
    V(t, x) = V_+(x - vt) + V_-(x + vt)
  \end{align*}
   in a neighborhood of $t = 0$.
   Then, $V_0(x) \coloneqq V(0, x)$ satisfying $(N_t)_0(x) = (V_0)_{xx}(x)$ and $V_t(0, x) = N_0(x) + \left\vert E_0(x) \right\vert^2$ can be written as follows:
  \begin{align}\label{eq_solitonV2}
    V_0(x) \coloneqq
    \begin{cases}
      0 &\text{if } x\in [0, 3L)\\
      \dfrac{\sqrt{2} v \Emax}{\sqrt{1-v^2}} E(\varphi_V(x-7L/2), q) + \dfrac{N_0}{2v}L &\text{if } x \in [3L, 4L)\\
      -\dfrac{\sqrt{2} v \Emax}{\sqrt{1-v^2}} E(\varphi_V(x-9L/2), q) + \dfrac{N_0}{2v}L &\text{if } x \in [4L, 5L)\\
      0 &\text{if } x \in [5L, 8L)\\
    \end{cases},
  \end{align}
  where $\varphi_V(x)$ is defined in Lemma \ref{lem_solitonV}.
\end{lemma}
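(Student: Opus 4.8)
The plan is to verify the claimed formula \eqref{eq_solitonV2} directly, piece by piece on each of the four intervals, by differentiating. The key structural observation is that on the collision interval the ansatz $V(t,x) = V_+(x-vt) + V_-(x+vt)$ decouples into a right-moving and a left-moving part, each of which is a single-soliton $V$-profile of the type already analyzed in Lemma~\ref{lem_solitonV} (with $V_0=0$ there, and with the spatial shift appropriate to placing the soliton at $x=7L/2$ resp.\ $x=9L/2$). So the proof reduces to (i) checking that the right-hand side of \eqref{eq_solitonV2} is exactly $V_+(x-7L/2)+V_-(x-9L/2)$ rewritten using the definitions of $E_\pm$, $N_\pm$ from \eqref{eq_soliton1}, \eqref{eq_soliton2}, and (ii) checking the two defining relations $(N_t)_0 = (V_0)_{xx}$ and $V_t(0,\cdot) = N_0 + |E_0|^2$.

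First I would treat the two ``trivial'' intervals $[0,3L)$ and $[5L,8L)$: there $E_0 \equiv 0$ and $N_0 \equiv 0$ and $(N_t)_0 \equiv 0$ (since both shifted solitons vanish identically there by \eqref{eq_collision1}), so $V_0 \equiv 0$ is consistent with $(V_0)_{xx}=0=(N_t)_0$ and $V_t = 0 = N_0+|E_0|^2$; the only subtlety is continuity at the endpoints $x=3L$ and $x=5L$, which is where the additive constant $\tfrac{N_0}{2v}L$ enters. Then for $x\in[3L,4L)$ I would differentiate the stated expression in $x$ twice: using Lemma~\ref{lem_solitonV}(2) and the chain rule through $\varphi_V$, $\partial_x E_2(\varphi_V(x),q)$ produces $\operatorname{dn}^2$ of the argument (the $\sqrt{1-q\sin^2}$ factor cancels against the derivative of $\sin^{-1}(\operatorname{sn}(\cdot))$), matching $N_+$ up to the constant $N_0$; a second differentiation then yields $(N_+)_x = (N_t)_0$ via the continuous identity $N_t = V_{xx}$ restricted to a single soliton. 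The time-derivative relation $V_t(0,x) = N_0(x)+|E_0(x)|^2$ follows on this interval from $V_t = -v\,V_+'(x-vt)$ at $t=0$, i.e.\ from $-v\,\partial_x V_+ = N_+ + |E_+|^2$, which is precisely the single-soliton statement of Lemma~\ref{lem_solitonV} transported by the traveling-wave structure. The interval $[4L,5L)$ is identical with $v$ replaced by $-v$ (the left-moving soliton), which accounts for the sign flip in front of the $E_2$-term.

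The main obstacle I expect is bookkeeping the shifts and the integration constants so that $V_0$ comes out genuinely $C^1$ (not merely continuous) across $x=3L,4L,5L$ --- this is the whole point of the lemma, since the ``naive'' choice $V_1$ fails exactly because of non-differentiability there. Concretely, at $x=4L$ one must check that the right-moving profile, which by then has completed half a period of $\varphi_V$, matches in both value and first derivative the left-moving profile starting its half-period; the constant $\tfrac{N_0}{2v}L$ is chosen precisely so the values agree, while the derivative agreement uses $E(\pi/2,q)$ together with the relation \eqref{eq_N0} defining $N_0$ (the same relation that makes a single $V$ periodic). I would isolate this endpoint-matching computation as the crux and do the interior differentiations as routine applications of Lemma~\ref{lem_solitonV}; everything else is substitution.
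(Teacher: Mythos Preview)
The paper states this lemma without proof, so there is no argument to compare against; your plan --- direct piecewise verification via the single-soliton formula of Lemma~\ref{lem_solitonV} and the relation~\eqref{eq_N0} --- is exactly the natural route and would constitute a complete proof.

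Two corrections to your outline. First, on $[0,3L)$ and $[5L,8L)$ the values $E_0$ and $N_0$ are the constant boundary values $E_\pm(0,\mp L/2)$ and $N_\pm(0,\mp L/2)$, which are \emph{not} literally zero (the equalities $E_+(0,-L/2)=0$ etc.\ in~\eqref{eq_collision1} are imprecise, since $\dn$ never vanishes); what you actually need and what is true is that $(N_t)_0=0$ there, because $(N_\pm)_x\big|_{x=\pm L/2}$ contains the factor $\cn(\pm K(q),q)=0$.

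Second, and more importantly, you should not frame the endpoint matching as a $C^1$ verification. The lemma only asserts the two relations $(N_t)_0=(V_0)_{xx}$ and $V_t(0,\cdot)=N_0+|E_0|^2$, and these are piecewise identities on the open subintervals. Continuity of $V_0$ at $x=3L,4L,5L$ does follow from~\eqref{eq_N0} (your computation is right: the constant $\tfrac{N_0}{2v}L$ is chosen precisely so the pieces match in value). But $(V_0)_x$ does \emph{not} match across these points in general: at $x=4L$, for instance, the one-sided derivatives are $\pm\tfrac{vE_{\max}^2}{1-v^2}\dn^2(K(q),q)=\pm\tfrac{vE_{\max}^2(1-q)}{1-v^2}$, which differ unless $q=1$. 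So your claim that ``the derivative agreement uses $E_2(\pi/2,q)$ together with~\eqref{eq_N0}'' would fail if you tried to carry it out. This does not damage the lemma --- $C^1$ is simply not part of what is being claimed --- but it means the crux of the proof is only the interior differentiation plus value-continuity, both of which are straightforward.
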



Another remedy is to consider $(N_t)_1(x) \coloneqq N_t(0,x) = (V_1)_{xx}(x)$ as the initial condition for $N_t$.

It is important to note that the initial conditions of $N_t$ and $V$ have a significant influence on the results of the numerical experiments.
For $i=0,1$, let us denote the scheme (G) based on $(N_t)_i$ as ($\mathrm{G}_i$), and  also (D) based on $V_i$ as ($\mathrm{D}_i$).
Under these settings, let us now consider the case where $\Emax$ is small, such as $\Emax=0.5$, where clearly early convergence can be observed for any method.
Then, ($\mathrm{G}_0$) and ($\mathrm{D}_0$) (and ($\mathrm{G}_1$) and ($\mathrm{D}_1$)) converge to similar waveforms, but the results of ($\mathrm{G}_0$) and ($\mathrm{G}_1$) (and similarly ($\mathrm{D}_0$) and ($\mathrm{D}_1$)) are clearly different.
This seems natural since the collision problem determined by $(N_t)_0$ and $V_0$ is different from that determined by $(N_t)_1$ and $V_1$.
In the numerical comparisons, these differences can cause serious confusion, especially when $\Emax$ is large and convergence can be expected only for small $\Dt$'s.
There it is hard to know whether the remaining difference (of solutions) comes from the choice of initial data or the difference of schemes.
Therefore, in the following, we adopt $(N_t)_0$ and $V_0$, which seem to be more natural initial conditions for the soliton collision experiment.
Since it seems mathematically very difficult to obtain an exact analytical solution for the situation where two solitons collide, we does not pursue it in this study.


\subsubsection{Result}

Now, we show the results when $\Emax = 1,2$ and $T=T_L$ in Table \ref{table_exp2}.
We denote the error with the ``exact solution'' as $\tilde{\varepsilon}_E, \tilde{\varepsilon}_N$ to distinguish it from the case where the error with the true exact solution is evaluated.

\begin{table}[htb]
  \caption{Results of Experiment 2}
  \label{table_exp2}
  \centering
  \begin{tabular}{ l l l l l l l l l}
    \hline
    $E_{\mathrm{max}}$ & $\Dt$ & & $\mathcal{E}^{(0)}$ & $d\mathcal{E}$ & $\tilde{\varepsilon}_E$ & $\tilde{\varepsilon}_N$ & time \\ \hline\hline
    1 & 0.1 & (GP) & 2.59 & 9e-4 & 1.06 & 0.42 & 0.922s \\
    &  & (GN) & 2.59 & e-12 & 1.06 & 0.42 & 0.992s \\
    & & (D) & 2.59 & e-7 & 0.29 & 0.18 & 10.3s \\ \cline{2-8}

    & 0.05 & (GP) & 2.59 & 1e-4 & 0.28 & 0.10  & 3.40s \\
    & & (GN) & 2.59 & e-11 & 0.28 & 0.10  & 3.40s \\
    & & (D) & 2.59 & e-9 & 0.081 & 0.035 & 43.2s \\ \hline

    2 & 0.1 & (GP) & 17.44 & 0.042 & 5.58 & 4.42 & 0.687s \\
    & & (GN) & 17.44 & e-11 & 5.59 & 4.42 & 0.604s \\
    & & (D) & 17.22 & e-8 & 4.67 & 1.52 & 13.3s \\ \cline{2-8}

    & 0.05 & (GP) & 17.33 & 5e-3 & 1.81 & 0.82  & 2.20s \\
    & & (GN) & 17.33 & e-10 & 1.79 & 0.82  & 2.13s \\
    & & (D) & 17.28 & e-6 & 1.24 & 0.39 & 41.2s \\ \cline{2-8}

    & 0.025 & (GP) & 17.30 & 7e-4 & 0.38 & 0.20 & 8.39s \\
    & & (GN) & 17.30 & e-9 & 0.38 & 0.20 & 8.64s \\
    & & (D) & 17.29 & e-8 & 0.30 & 0.052\textit{} & 200s \\ \hline 




  \end{tabular}
\end{table}

The basic trend of the results is the same as in Experiment 1, so (G) is superior for the same reasons as in Experiment 1.





\section{Concluding remarks}

We have presented a mathematical analysis for the energy-conservative scheme for the Zakharov equations based on DVDM.
There we proved solvability under the loosest possible assumptions and improved the argument of convergence by Glassey (the unnecessary condition $\Dt = \Dx$ is removed, at least in the discussion in the convergence estimate).
We also reorganized the discussion (borrowed from Glassey's argument) so that the strategy is more visible.
In other words, we now have a better understanding of how to utilize invariants in the analysis of conservative schemes, which we expect can be applied to other schemes.

In addition, the DVDM scheme is compared with Glassey's scheme by extensive numerical experiments.
The results show that (G) outperforms (D) in most situations.
This is because the DVDM scheme is merely a consequence of the general method, whereas Glassey's scheme is an implicit linear scheme cleverly constructed utilizing the properties of the Zakharov equations (and thus faster to solve).
However, we also found that if the scheme proposed by Glassey in \cite{Glassey} is implemented as (GP), it becomes unstable in longer experiments due to the distortion of the conserved quantities. This problem is successfully solved by our proposed method of determining $\Nd{1}{}$.

For future prospects, we are exploring the possibility of applying the structure of the convergence argument to other conservative schemes for other equations, and we are also interested in the other conservative schemes for the Zakharov equations not dealt with in this study.
As mentioned in the introduction, the energy-conservative schemes in \cite{Chang, Ji, Pan} also all have a distorted form of the invariant corresponding to energy with respect to time.
It is a point of concern whether this still implies instability for long-time calculations and whether the issue can be addressed.

\section*{Statements and Declarations}
There are no conflicts of interests that are related to the content of this study.

\end{document}